\DeclareMathOperator{\sign}{sign}
\newtheorem{theorem}{Theorem}[section]
\newtheorem{lemma}[theorem]{Lemma}
\newtheorem{cor}{Corollary}[section]
\newtheorem*{rep@theorem}{\rep@title}
\newcommand{\newreptheorem}[2]{%
\newenvironment{rep#1}[1]{%
 \def\rep@title{#2 \ref{##1}}%
 \begin{rep@theorem}}%
 {\end{rep@theorem}}}
\theoremstyle{definition}
\newtheorem{definition}[theorem]{Definition}
\newtheorem{remark}[theorem]{Remark}
\title{On majorization for polynomials sharing a common interlacer}
\date{\today}
\author{Aurelien Gribinski\\ EPFL,Jussieu
}
\begin{document}

\maketitle
\begin{abstract}
We give necessary and sufficient conditions for majorization of realrooted polynomials sharing a common interlacer by means of residues coming from fraction decomposition. We also introduce a motivated notion called strong majorization, and we show how it relates to standard majorization.   
\end{abstract}
\section{Introduction}
The notion of majorization is fundamental in linear algebra. It has applications in many different fields, including convex geometry and probability (via doubly stochastic matrices). In this paper we focus on majorization for roots of polynomials, and we deduce some systematic criteria related to fraction decomposition  to check whether majorization is taking place. In particular we come up with a new notion called strong majorization that can be easily checked through partial fraction decomposition. We need to point out that majorization between polynomials gives a lot of information on the roots of one of the two polynomials if the roots of the other ones are known, as it is a strong property that involves all roots simultaneously. Majorization of polynomials was studied by  Borcea and Branden(in \cite{Bor}) who showed that linear operators preserving hyperbolicity also preserve majorization. Recently, Van Assche showed (in \cite{Assche}) that roots of consecutive orthogonal polynomials verify majorization.   \\

\begin{definition}[vector and polynomial majorization, from \cite{BMAJ}]
We say that two vectors $a=(a_1,a_2\dots,a_n)$ and $b=(b_1,b_2\dots,b_n)$ with $a_1\geq a_2\dots\geq a_n$  and  $b_1\geq b_2\dots\geq b_n$ are such that $a$ majorizes $b$ written  $a \succeq b $   if, for all $k \leq n$,
\begin{align*}
\sum_{i=1}^ka_i &\geq \sum_{i=1}^kb_i    &\text{and}&&  & \sum_{i=1}^na_i =\sum_{i=1}^nb_i .
\end{align*}
We say that a polynomial $p$ of degree $n$  with roots $\lambda=\big(\lambda_1,\lambda_2\dots,\lambda_n\big)$ where $\lambda_1\geq \lambda_2\dots \geq \lambda_n$ majorizes $q$ with roots $\mu=\big(\mu_1,\mu_2\dots,\mu_n\big)$  where also $\mu_1\geq \mu_2\dots \geq \mu_n$, denoted by $p \succeq q $,   if $ \lambda \succeq \mu$. 
\end{definition}

 In the following, we will interpolate continuously between two real-rooted polynomials $p$ and $q$. 
The property of common interlacing insures, as we will see in the next lemma,  that convex combinations of real-rooted polynomials are still real-rooted. It was extensively used in \cite{MSS1} to prove existence of Ramanujan graphs and in \cite{MSS2} to prove the Kadison Singer conjecture. 
\begin{definition}\label{interl}
 Two degree $n$ polynomials $p$ and $q$  have a common interlacer if there exists a third polynomial interlacing both $p$ and $q$. More explicitely, it happens when  the roots of $p$, denoted by $(\lambda_i)_{i=1}^n$ , with $\lambda_1 \geq  \lambda_2 \dots \geq \lambda_n$ and the roots of $q$ , denoted by $(\mu_i)_{i=1}^n$, with $ \mu_1 \geq  \mu_2 \dots \geq \mu_n$, are such that the intervals $[\lambda_i,\mu_i]$ or $[\mu_i,\lambda_i]$ are non-crossing,  depending on  $\lambda_i \leq \mu_i$ or $\mu_i \leq \lambda_i$ .  
\end{definition}
\begin{lemma}[Lemma 4.5 from \cite{MSS1}]\label{commoninterl}
$p$ and $q$ have a common interlacer $r$ if and only if $p_t:=tp+(1-t)q$ is real-rooted for all $t \in [0,1]$. Furthermore $r$  also interlaces $p_t$, and the roots $\lambda_i(t)$ of $p_t$ move monotonously in the intervals  $[\lambda_i,\mu_i]$ or $[\mu_i,\lambda_i]$.
\end{lemma}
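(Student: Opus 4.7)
My plan is to prove the two implications of the equivalence separately, and then read off monotonicity from the resulting picture of root trajectories.

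\textbf{Forward direction.} Suppose $r$ is a common interlacer with roots $\rho_1>\dots>\rho_{n-1}$. After normalizing $p$ and $q$ to share a leading-coefficient sign, interlacing forces $p(\rho_i)$ and $q(\rho_i)$ to have the same alternating sign for each $i$. Hence $p_t(\rho_i)=tp(\rho_i)+(1-t)q(\rho_i)$ inherits that sign, and the intermediate value theorem locates $n-1$ real roots of $p_t$, one in each gap $(\rho_{i+1},\rho_i)$; comparing the sign at $\rho_1$ (resp.\ $\rho_{n-1}$) with the behavior at $+\infty$ (resp.\ $-\infty$) produces the $n$-th. The same sign alternation shows $r$ interlaces every $p_t$.

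\textbf{Reverse direction.} Assume $p_t$ is real-rooted for every $t\in[0,1]$. The roots form continuous (even locally analytic) branches $\lambda_i(t)$ with $\lambda_i(0)=\mu_i$ and $\lambda_i(1)=\lambda_i$. I claim no two adjacent branches cross. If $\lambda_i$ and $\lambda_{i+1}$ collided at a point $\xi$ at time $t_0$, the local Puiseux expansion
\[
p_t(x)\approx \tfrac{1}{2}p_{t_0}''(\xi)(x-\xi)^2+(t-t_0)(p-q)(\xi)
\]
would produce a complex-conjugate pair on one side of $t_0$, contradicting real-rootedness, unless $(p-q)(\xi)=0$. Combined with $p_{t_0}(\xi)=0$ that forces $p(\xi)=q(\xi)=0$, i.e.\ a shared root where the branches only touch rather than cross. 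Consequently the intervals $[\min(\lambda_i,\mu_i),\max(\lambda_i,\mu_i)]$ are pairwise non-crossing, and picking one point in each separating gap yields a polynomial $r$ that interlaces both $p$ and $q$.

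\textbf{Monotonicity.} Differentiating $p_t(\lambda_i(t))=0$ implicitly gives
\[
\lambda_i'(t)=-\frac{(p-q)(\lambda_i(t))}{p_t'(\lambda_i(t))}.
\]
The relation $tp(\lambda_i)+(1-t)q(\lambda_i)=0$ rewrites $(p-q)(\lambda_i)$ as $-q(\lambda_i)/t$; the sign of $q(\lambda_i(t))$ is preserved along the trajectory because, by non-crossing, $\lambda_i(t)$ never passes through a root of $q$, while $\mathrm{sign}\, p_t'(\lambda_i(t))$ is fixed by the interlacing pattern of $p_t$. Thus $\lambda_i'(t)$ keeps a constant sign and $\lambda_i(t)$ moves monotonically between $\mu_i$ and $\lambda_i$.

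The delicate step is the non-crossing claim in the reverse direction: a single double root at $t_0$ does not by itself violate real-rootedness, so the obstruction must be extracted from the behavior of the whole one-parameter family, which is why the local quadratic-plus-linear Puiseux expansion above is needed rather than a purely static argument at $t_0$. Once non-crossing is in hand, the rest is a sign chase.
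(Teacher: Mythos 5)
The paper does not prove this lemma; it is quoted as Lemma~4.5 of \cite{MSS1}, so there is no in-paper argument to compare against. I therefore assess your sketch on its own. The overall architecture — intermediate value theorem for the forward direction, continuity of root branches plus a local quadratic perturbation for the reverse, and implicit differentiation of $p_t(\lambda_i(t))=0$ for monotonicity — is the standard and correct route, and the forward direction is fine.

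There is, however, a genuine gap in the reverse direction and in the monotonicity step, and it is the same gap in both places. You pass from ``adjacent branches do not cross'' to ``the intervals $[\min(\lambda_i,\mu_i),\max(\lambda_i,\mu_i)]$ are pairwise non-crossing,'' and later you assert that ``by non-crossing, $\lambda_i(t)$ never passes through a root of $q$.'' Neither of these follows from the non-crossing of the branches $\lambda_1(t)>\cdots>\lambda_n(t)$ alone: the ordering of the branches at each fixed $t$ says nothing a priori about their vertical range, so a branch could in principle overshoot the fixed point $\mu_{i-1}$ (which is a value of a branch only at $t=0$) while still respecting the strict ordering at every time. Concretely, non-crossing does not by itself rule out $\lambda_{i+1}>\mu_i$. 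The missing ingredient is the elementary observation that a branch cannot meet a root of $q$ (or of $p$) for $t$ in the open interval: if $\lambda_i(t)=\mu_j$ for some $t\in(0,1)$, then $0=p_t(\mu_j)=tp(\mu_j)+(1-t)q(\mu_j)=tp(\mu_j)$, so $p(\mu_j)=0$, i.e.\ $p$ and $q$ share the root $\mu_j$ — excluded under the standing assumption (and trivial otherwise, since a shared root gives a constant branch that can be factored out, as the paper notes). Once this is in hand, continuity confines each branch $\lambda_i(t)$, $t\in(0,1]$, to a single complementary interval of $\{\mu_1,\dots,\mu_n\}$ and likewise of $\{\lambda_1,\dots,\lambda_n\}$; the non-crossing of the intervals $[\min(\lambda_i,\mu_i),\max(\lambda_i,\mu_i)]$ follows, the sign of $q(\lambda_i(t))$ is then genuinely constant on $(0,1]$, and your formula $\lambda_i'(t)=q(\lambda_i(t))/\bigl(t\,p_t'(\lambda_i(t))\bigr)$ delivers monotonicity. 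I would also flag, more mildly, that the Puiseux expansion assumes $p_{t_0}''(\xi)\neq0$, i.e.\ an exactly double collision; higher-order coalescences need a word (the same real-rootedness obstruction applies, but the exponent in the expansion changes).
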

In all the following we will assume that the roots of the polynomials considered are simple, and $p$ and $q$ have a common interlacer. Also, the vector of nonincreasing roots of $p$ will be denoted by $\lambda$ and the vector of roots of $q$ by $\mu$.

We can decompose the ratio $p$ over $q$ in simple poles:
\[
\frac{p}{q}= 1+ \sum_{i=1}^{i=n} \frac{p[\mu_i]}{q'[\mu_i]}\frac{1}{(x-\mu_i)}.
\]
Notice that by looking at the asymptotic coefficient in front of $\frac{1}{x}$ in the expansion on both sides and equating we get, assuming that majorization holds: 
\begin{equation}
 \sum_{i=1}^n\frac{p[\mu_i]}{q'[\mu_i]}= \sum_{i=1}^n\mu_i -  \sum_{i=1}^n\lambda_i=0,
\end{equation}
using the fact that the total sum of roots of vectors that majorize are equal. \\
Assume that $\lambda_i=\mu_i$ for $1\leq i\leq n$, then  $\frac{p[\mu_i]}{q'[\mu_i]}=0$ and for $j \neq i$,  $\frac{\tilde{p}[\mu_j]}{\tilde{q}'[\mu_j]}=\frac{p[\mu_i]}{q'[\mu_i]}$ where $\tilde{p}:= \frac{p}{x-\lambda_i}$ and $\tilde{q}:= \frac{q}{x-\mu_i} $. Therefore, we can replace $p$ and $q$ by $\tilde{p}$ and $\tilde{q}$ and the partial sums of residues will all be the same, and we get rid of a redundancy. Also, when looking at the roots $\lambda_i(t)$ of $tp+(1-t)q$ in subsequent proofs, if $\lambda_i=\mu_i$ then trivially $\lambda_i(t)=\lambda_i \forall t \in [0,1]$, so we can remove the shared roots and it won't affect the monotonicity properties considered.
 In the following we will assume (without loss of generality as we have just seen) that the roots of $p$ and $q$ are distinct.

\subsection{Summary of the results}

Section~2 of our paper gives a necessary condition for majorization in terms of sums of residues of the quotient of the two polynomials.

\begin{reptheorem}{NCM}[Necessary condition for majorization]
For polynomials $p$ and $q$ with a common interlacer and distinct roots,
\[
p \succeq q \implies \sum_{i=1}^{i=k} \frac{p[\mu_i]}{q'[\mu_i]} < 0 \text{  } \forall k<n.
\]
\end{reptheorem}
\begin{remark}
The property of "having a common interlacer" is much weaker than proper interlacing: in each interval, the order between roots of $p$ and $q$ can get reversed. It is a standard result from the theory of interlacing (see Section~2.3 in \cite{Wagner}) that interlacing of $p$ and $q$ is equivalent to the fact that all residues  $\frac{p[\mu_i]}{q'[\mu_i]}$ have the same sign. 
\end{remark}
As the condition is not sufficient, we need to find a stronger characterization. We come up in Section~3 with a sufficient condition by reversing the residues:

\begin{theorem}[Sufficient condition, follows directly from Theorem~\ref{suffcor}]
For polynomials $p$ and $q$ with a common interlacer, assuming that  $\sum_{i=1}^n\lambda_i= \sum_{i=1}^n\mu_i$,
\[
 \sum_{i=1}^{i=k} \frac{q[\lambda_i]}{p'[\lambda_i]} \geq 0 \text{ }\forall k<n  \implies  p \succeq q .
\]
\end{theorem}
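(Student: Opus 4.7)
The natural route is the interpolation $p_t := tp + (1-t)q$ for $t \in [0,1]$. By Lemma~\ref{commoninterl}, each $p_t$ is real-rooted with roots $\lambda_i(t)$ that move monotonously from $\mu_i$ (at $t=0$) to $\lambda_i$ (at $t=1$). Writing $S_k(t) := \sum_{i=1}^k \lambda_i(t)$, the majorization $p \succeq q$ becomes $S_k(1) \geq S_k(0)$ for every $k \leq n$, with the case $k=n$ covered by the equal-sum hypothesis. The target therefore reduces to showing $S_k(1) \geq S_k(0)$ for each $k<n$.

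Implicit differentiation of $p_t(\lambda_i(t))=0$ using $\partial_t p_t = p-q$, combined with the rearrangement $p(\lambda_i(t)) = -\tfrac{1-t}{t}\, q(\lambda_i(t))$ (which is just $p_t(\lambda_i(t))=0$ solved for $p$), produces the clean formula
\[
\lambda_i'(t) \;=\; \frac{q(\lambda_i(t))}{t\, p_t'(\lambda_i(t))}, \qquad t \in (0,1].
\]
Summing gives $S_k'(t) = T_k(t)/t$, where $T_k(t) := \sum_{i=1}^k q(\lambda_i(t))/p_t'(\lambda_i(t))$. The hypothesis of the theorem is exactly $T_k(1) \geq 0$, while the vanishings $q(\mu_i)=0$ give $T_k(0) = 0$.

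The central task is to upgrade the boundary inequality $T_k(1) \geq 0$ to $T_k(t) \geq 0$ throughout $(0,1]$; once established, the identity
\[
S_k(1) - S_k(0) \;=\; \int_0^1 \frac{T_k(t)}{t}\,dt \;\geq\; 0
\]
finishes the proof, the integral being finite since $T_k(t) = O(t)$ near $0$. A preliminary sign analysis against the common interlacer $r$ shows that each summand $q(\lambda_i(t))/p_t'(\lambda_i(t))$ carries the fixed sign of $\lambda_i - \mu_i$ throughout $(0,1]$ (the $i$-th trajectory cannot cross $\mu_i$ by monotonicity), so $T_k(t)$ is a sum of fixed-sign summands and the obstruction lives entirely in the mixed-sign cancellations. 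I would try to close this gap either by a maximum-principle argument — if $T_k(t_*) = 0$ at a first interior time $t_* \in (0,1)$, derive a contradiction with $T_k(1) \geq 0$ using the monotone root flow — or by establishing directly that the residue inequality is preserved under the substitution $p \mapsto p_s$ along the interpolation. The latter route is presumably what Theorem~\ref{suffcor} packages via the \emph{strong majorization} framework announced in the abstract. Propagating the endpoint condition to the full interval is, to my mind, the principal obstacle of the proof.
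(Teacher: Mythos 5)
Your setup is exactly the paper's: the interpolation $p_t = tp+(1-t)q$, the implicit differentiation giving $\lambda_i'(t) = \frac{1}{t}\,\frac{q}{p_t'}[\lambda_i(t)]$, the reduction to showing $T_k(t) := \sum_{i=1}^k \frac{q}{p_t'}[\lambda_i(t)] \geq 0$ on $(0,1]$, and even the observation that each summand has a fixed sign while the difficulty is in the mixed-sign cancellation. You have correctly identified the shape of the argument. But you stop precisely where the real work begins, and you say so yourself. That gap is genuine and it is the whole content of Lemma~\ref{strictineq} and Corollary~\ref{suffcor}.

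Here is what the paper does that you do not. First, for the \emph{strict} case $T_k(1)>0$ for all $k<n$: suppose some $S_k'$ vanishes in $(0,1)$ and let $t_0$ be the \emph{largest} such time. The key move is to rewrite the same derivative using the other root of $p_t$, namely $S_k'(t) = \frac{-1}{1-t}\sum_{i=1}^k \frac{p}{p_t'}[\lambda_i(t)]$. At $t_0$ this shows the partial sums of residues of $p/p_{t_0}$ satisfy $\sum_{i=1}^{k_0}\frac{p}{p_{t_0}'}[\lambda_i(t_0)] = 0$ and $\sum_{i=1}^{k}\frac{p}{p_{t_0}'}[\lambda_i(t_0)] \leq 0$ otherwise. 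But by maximality of $t_0$ we also have $p \succeq p_{t_0}$, with a common interlacer and distinct largest roots, so Theorem~\ref{NCM} (the \emph{necessary} condition for majorization, proved in Section~2) forces \emph{all} these partial sums to be strictly negative — contradicting the zero. Your ``maximum-principle'' intuition is pointing in this direction, but without invoking Theorem~\ref{NCM} you have no contradiction; the necessary condition is the actual engine of the argument. Second, your hypothesis is only $T_k(1) \geq 0$, not $>0$, and a continuity argument near $t=1$ no longer gets off the ground if some $T_k(1)=0$. The paper handles this via a separate perturbation step (Corollary~\ref{suffcor}): do a small Robin--Hood shift of the roots of $q$ to a $q_\epsilon$, show via Lemma~\ref{lemmabispos} and the Schur--Ostrowski criterion that the perturbed residue sums are strictly positive, apply the strict case, and send $\epsilon\to 0$ using continuity of roots in coefficients. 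You omit this entirely. So the proposal is a correct reduction but not a proof; the two substantive ideas — the appeal to the necessary condition at the maximal zero, and the perturbation to handle weak inequalities — are both missing.
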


 It so happens that to find a sufficient characterization of majorization in terms of residues, we were led to strengthen the notion of majorization, and consider a continuous version. 

\begin{definition}[Strong majorization]
Assume $p$ and $q$ have a common interlacer as usual. Denote by $ \lambda_i(t)$ the roots of $t p+(1-t)q$ by decreasing order. We say that $p$ strongly majorizes $q$ if all the partial sums $S_k(t):=\sum_{i=1}^k \lambda_i(t)$ for $k=1\dots n$  are nondecreasing. That is, for all $s,t \in [0,1]$ such that  $s\geq t$: $S_k(s) \geq S_k(t)$, or said otherwise this means that   a continuous convex majorization holds:
\[
 sp+(1-s)q \succeq  tp+(1-t)q .
\]
In particular strong majorization implies majorization. We will denote this strong majorization by $p \succeq^s q$.
\end{definition}

Now, the main result from Section~3 reads as follows:

\begin{reptheorem}{NSCM}
For polynomials $p$ and $q$ with a common interlacer,
\[
p \succeq^s q \iff \sum_{i=1}^{i=k} \frac{q[\lambda_i]}{p'[\lambda_i]} \geq 0 \text{  } \forall k<n \text{ and } \sum_{i=1}^n\lambda_i= \sum_{i=1}^n\mu_i.
\]
\end{reptheorem}
It should be noted that such a necessary and sufficient characterization of strong majorization is extremely easy to check numerically: we just have to compute the partial sums of residues. Therefore, some nontrivial information on relative positioning of roots can be deduced from easy inspection of the quotient of the two polynomials. \\
In Section~4, we detail the difference between majorization and strong majorization. Namely, we prove the following.

\begin{reptheorem}{DiffMaj}[Simple majorization without strong majorization]
Assume $p$ and $q$  admit a common interlacer, have distinct roots and  $p \succeq q$ . \\
 If   $\sum_{i=1}^k \lambda_i = \sum_{i=1}^k \mu_i$ for  $1<k<n$, then there exists a partial sum of residues\\
 $ \sum_{i=1}^{i=k_0} \frac{q[\lambda_i]}{p'[\lambda_i]} <0$, for $k_0 \leq k$. So that in this case, there is no strong majorization.
\end{reptheorem}

We then deduce  a more accurate characterization of strong majorization. If the partial sums of roots $S_k(t)$ are all nondecreasing for $k<n$, then they are in fact necessarily increasing.

\begin{repcor}{strictmon}[Strict monotonicity]
Assume $p$ and $q$ admit a common interlacer and have distinct roots, as well as $\sum_{i}^n\lambda_i= \sum_{i}^n\mu_i$.Then:
\[
   p \succeq^s q  \iff \text{ }\forall k<n,  S_k(t)=\sum_{i=1}^k\lambda_i(t) \text{ is increasing on } [0,1]. 
\]\end{repcor}

\subsection{Classical tools from majorization theory}
Let us recall a few basic facts about majorization that will be useful in the subsequent proofs.
\begin{lemma}[Robin-Hood characterization, also called Dalton in \cite{BMAJ}] \label{Robin} Assume $a\succeq b$. 
From $a$ we can obtain $b$ by a finite sequence of operations where we replace two elements $a_i$ and $a_j$ such that $a_j<a_i$  ($j<i$) with $a_i-\epsilon$ and $a_j+\epsilon$ respectively for an $ \epsilon \in ]0,a_i-a_j[$. 
\end{lemma}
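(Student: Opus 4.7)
The plan is to argue by induction on $D(a,b) := \#\{k : a_k \neq b_k\}$, showing that at each step one can exhibit a single Robin--Hood transfer that strictly decreases $D$ while preserving both the nonincreasing order and the property of majorizing $b$. If $D(a,b)=0$ there is nothing to do, so I assume $a\neq b$ and aim to reduce to that base case in at most $n$ steps.

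The first step is to locate the two indices on which to act. Let $i$ be the smallest index with $a_i<b_i$; this is well defined because $\sum a_k=\sum b_k$ together with $a\neq b$ forces some coordinate where $a$ falls below $b$, and the partial-sum inequality at $k=1$ rules out $i=1$. Next let $j<i$ be the largest index with $a_j>b_j$. Such a $j$ exists, for otherwise $a_k\leq b_k$ for all $k<i$; combined with $\sum_{k=1}^{i-1}(a_k-b_k)\geq 0$ this forces $a_k=b_k$ on $k<i$, which then makes $\sum_{k=1}^{i}(a_k-b_k)=a_i-b_i<0$, contradicting majorization. By the extremality of the pair $(j,i)$, every index $k$ with $j<k<i$ satisfies both $a_k\leq b_k$ (from maximality of $j$) and $a_k\geq b_k$ (from minimality of $i$), hence $a_k=b_k$ on that open interval.

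I would then set $\epsilon := \min(a_j-b_j,\ b_i-a_i)>0$ and replace $a_j$ by $a_j-\epsilon$ and $a_i$ by $a_i+\epsilon$, producing a new vector $a'$. Three things have to be checked. For the nonincreasing order, the only new inequalities to verify are $a_j-\epsilon\geq a_{j+1}$ and $a_{i-1}\geq a_i+\epsilon$, and these follow for free from $a_{j+1}=b_{j+1}\leq b_j\leq a_j-\epsilon$ and $a_{i-1}=b_{i-1}\geq b_i\geq a_i+\epsilon$, using the intermediate equalities $a_k=b_k$ established above. For the majorization $a'\succeq b$, only the partial sums at indices $m\in\{j,\dots,i-1\}$ change, each dropping by exactly $\epsilon$, and on that range the old discrepancy equals $\sum_{k=1}^{m}(a_k-b_k)=a_j-b_j\geq\epsilon$, so the new partial sum is still $\geq \sum_{k=1}^m b_k$. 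Finally, the choice of $\epsilon$ makes $a'_j=b_j$ or $a'_i=b_i$, so $D(a',b)\leq D(a,b)-1$ and the induction proceeds.

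The main obstacle is the ordering condition: for an arbitrary pair of indices, a Robin--Hood transfer of size $\min(a_j-b_j,b_i-a_i)$ can easily break monotonicity. Choosing $j$ and $i$ to be consecutive deviation points, in the sense that all intermediate coordinates already coincide with $b$, is what makes the ordering check reduce to the monotonicity of $b$ itself, bypassing any quantitative control of gaps. Once this choice is made the rest is bookkeeping, and the termination is immediate from $D(a,b)\leq n$.
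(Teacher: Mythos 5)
The paper does not prove Lemma~\ref{Robin}; it is quoted as a classical fact (Dalton's/Robin--Hood characterization) from Marshall--Olkin--Arnold \cite{BMAJ}, so there is no in-text proof to compare against. Your argument is the standard textbook proof of this fact, and it is essentially correct: you induct on the number of disagreements $D(a,b)$, locate a pair of \emph{consecutive} deviation points $j<i$ (so that $a_k=b_k$ strictly between them), transfer $\epsilon=\min(a_j-b_j,\,b_i-a_i)$, and observe that this preserves the nonincreasing order and the majorization while strictly decreasing $D$. Two small slips are worth fixing. First, in the majorization check for $j\le m<i$ you write $\sum_{k=1}^{m}(a_k-b_k)=a_j-b_j$; this is only an inequality $\sum_{k=1}^{m}(a_k-b_k)\ge a_j-b_j$, since $\sum_{k=1}^{j-1}(a_k-b_k)\ge 0$ need not vanish (e.g.\ $a=(5,3,0)$, $b=(4,2,2)$ gives $j=2$, $i=3$, and the prefix discrepancy at $j-1$ is $1$). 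As only a lower bound is needed, the conclusion $\ge\epsilon$ still holds. Second, your order check is phrased for $j+1<i$, where $a_{j+1}=b_{j+1}$ and $a_{i-1}=b_{i-1}$ are available; in the degenerate case $j+1=i$ those equalities fail, but the single required inequality $a_j-\epsilon\ge a_i+\epsilon$ follows directly from $a_j-\epsilon\ge b_j\ge b_i\ge a_i+\epsilon$. With these cosmetic corrections the induction closes and the proof is complete.
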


We will need the notion of Schur convexity to analyze transformations of vectors exhibiting monotonicity with respect to majorization.

\begin{definition}[Definition A.1 in \cite{BMAJ}]\label{schurconvex}
A function $\phi$ defined on an open set $\mathcal{D} \subset \mathbb{R}^l$ is said to be Schur convex on $\mathcal{D}$ if for $x,y \in \mathcal{D}$:
\begin{align*}
 y &\succeq x  &  \implies&& \phi(y) \geq \phi(x). 
\end{align*}
If in addition  $\phi(y) > \phi(x)$ whenever the vectors are distinct (up to permutation, but we assume that they are ordered by decreasing order), then we say that $\phi$ is strictly Schur convex.  
\end{definition}

We can characterize Schur convexity with respect to partial derivatives.

\begin{theorem}[Global Schur-Ostrowski criterium, Theorem A.4 in \cite{BMAJ}]\label{ostroglobal}
A $C^1$ function $\phi(x_1,\dots,x_l)$ defined on an open set $\mathcal{D}^l \subset \mathbb{R}^l$ is strictly Schur convex on $\mathcal{D}^l$ if and only if $\phi$ is symmetric in its $l$ variables and for all $1\leq i\neq j \leq l$, $x_i \neq x_j$:
\[
(x_i-x_j) \Big(\partial_{x_i}{\phi(x)}-\partial_{x_j}{\phi(x)}\Big) > 0. 
\]
\end{theorem}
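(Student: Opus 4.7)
The plan is to reduce the global majorization condition to a sequence of elementary ``Robin--Hood transfers'' via Lemma~\ref{Robin}, and to analyze the monotonicity of $\phi$ along each transfer using the first-order derivative hypothesis. This way both implications become local statements about one-parameter paths, and the symmetry of $\phi$ lets the sign condition on $(x_i-x_j)(\partial_i\phi-\partial_j\phi)$ be converted into (strict) monotonicity along each such path.

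For the sufficient direction I would assume the partial-derivative condition and take $y\succeq x$ with $y$ and $x$ distinct up to permutation. By Lemma~\ref{Robin} there is a finite chain $y=v^{(0)},v^{(1)},\dots,v^{(N)}=x$ in which each $v^{(k+1)}$ is obtained from $v^{(k)}$ by a single transfer sending some coordinate $v^{(k)}_i$ down by $\varepsilon$ and some $v^{(k)}_j$ up by $\varepsilon$, with $v^{(k)}_i>v^{(k)}_j$. For each step I would parametrize the transfer as $\gamma(t)=v^{(k)}-t\,e_i+t\,e_j$, and if $\varepsilon$ is large enough that $\gamma_i$ and $\gamma_j$ cross I would split at the midpoint and use the symmetry of $\phi$ to reduce to sub-steps along which $\gamma_i(t)>\gamma_j(t)$ throughout. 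On any such sub-step the hypothesis gives
\[
\tfrac{d}{dt}\phi(\gamma(t))=-\partial_i\phi(\gamma(t))+\partial_j\phi(\gamma(t))<0,
\]
so $\phi(v^{(k+1)})<\phi(v^{(k)})$; telescoping along the finite chain yields $\phi(y)>\phi(x)$, i.e.\ strict Schur convexity.

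For the necessary direction I would assume strict Schur convexity and pick $x$ with $x_i>x_j$, then consider $\gamma(t)=x-t\,e_i+t\,e_j$. For every small $t>0$, $\gamma(t)$ is strictly majorized by $x$, so $\phi(\gamma(t))<\phi(x)$; dividing by $t$ and letting $t\downarrow 0$ yields $-\partial_i\phi(x)+\partial_j\phi(x)\leq 0$, i.e.\ $(x_i-x_j)(\partial_i\phi-\partial_j\phi)\geq 0$. The hard part will be upgrading $\geq$ to $>$. To do so I would use the symmetry of $\phi$, which forces $\partial_i\phi=\partial_j\phi$ on the diagonal $x_i=x_j$, and which together with the $C^1$ hypothesis allows factoring $\partial_i\phi-\partial_j\phi=(x_i-x_j)\,h(x)$ with $h$ continuous; the integrated strict decrease of $\phi$ along a whole Robin--Hood segment then rules out $h(x)=0$ off the diagonal, giving the required strict inequality. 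This strictness upgrade is the main obstacle: a purely infinitesimal argument only delivers $\geq$, so one must combine the symmetry of $\phi$, the $C^1$ regularity, and the integrated (not merely infinitesimal) form of strict Schur convexity to conclude.
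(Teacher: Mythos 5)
The paper itself does not prove this statement; it cites it verbatim as Theorem~A.4 of \cite{BMAJ}, so there is no in-text proof to compare against. That said, your sufficiency direction ($\Leftarrow$) is the standard Schur--Ostrowski argument via Lemma~\ref{Robin} and is sound. The crossing issue (a transfer of size $\varepsilon>(v_i-v_j)/2$) is correctly handled by symmetry, since such a transfer lands, up to swapping the $i$-th and $j$-th coordinates, at the same point as a transfer of size $v_i-v_j-\varepsilon<(v_i-v_j)/2$; on the non-crossing sub-path the hypothesis gives $\frac{d}{dt}\phi(\gamma(t))=-\partial_i\phi(\gamma(t))+\partial_j\phi(\gamma(t))<0$, and telescoping over the finite chain gives strict decrease. (You should also note that the transfer path stays inside the domain because $\mathcal{D}^l$ is a product of intervals.)

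The necessity direction ($\Rightarrow$) is where the gap you flagged cannot be filled: the ``only if'' half of the statement is in fact false under the stated $C^1$ hypothesis. Take $\psi:\mathbb{R}\to\mathbb{R}$ even, $C^1$, strictly increasing on $[0,\infty)$, but with $\psi'(u_0)=0$ at some $u_0>0$ (strict monotonicity only forces $\psi'>0$ off a null set, so such $\psi$ exist), and set $\phi(x_1,x_2):=\psi(x_1-x_2)$. Then $\phi$ is symmetric and strictly Schur convex (if $y\succeq x$ are distinct and ordered then $y_1-y_2>x_1-x_2\geq 0$), yet $\partial_1\phi-\partial_2\phi=2\psi'(x_1-x_2)$ vanishes on the locus $\{x_1-x_2=u_0\}$ where $x_1\neq x_2$, so the strict inequality fails there. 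Your proposed factoring $\partial_i\phi-\partial_j\phi=(x_i-x_j)\,h(x)$ with $h$ continuous is not available for general $C^1$ functions (it requires extra regularity), and even when it is available $h$ can vanish off the diagonal, as the example shows. What your infinitesimal argument actually proves is $(x_i-x_j)(\partial_i\phi-\partial_j\phi)\geq 0$, which is the sharp conclusion under $C^1$. Since the paper only ever invokes the sufficiency direction of Theorem~\ref{ostroglobal} (in the proof of Theorem~\ref{DiffMaj}), this overstatement is harmless to the paper, but it does mean the necessity proof you outline is unfixable rather than merely incomplete.
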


We will also use a restrictive version of Schur concavity using only two coordinates. 
 
\begin{lemma}[Local Schur-Ostrowski criterium, Lemma A.2 and Theorem A.3 in \cite{BMAJ}]\label{ostro}
 Consider a function $\phi$ on $n$ variables within an open subset $\mathcal{D}$ of $ \mathbb{R}^n$ that is $C^1$. For $\epsilon>0$ and  $x=(x_1,\dots,x_n) $ , define $x_{\epsilon}^{i,j}:= (x_1,\dots, x_i-\epsilon, \dots, x_j+\epsilon,\dots, x_n)$ where $i<j$  and where we performed a  Robin-Hood operation so that: $  x \succeq x_{\epsilon}^{i,j}$. Consider $\mathcal{D}_\epsilon^{i,j}:= \{ x \in \mathcal{D}, \text{ such that } x_\epsilon^{i,j} \in \mathcal{D}\}$. Then we have:
\begin{align*}
\partial_{x_i}\phi(y)  &< \partial_{x_j}\phi(y) \text{  } \forall y \in \mathcal{D}\implies&& \phi(x_{\epsilon}^{i,j} )>\phi(x)  \text{  } \forall x \in \mathcal{D}_\epsilon^{i,j},
\end{align*}
and:
\begin{align*}
\partial_{x_i}\phi(y)  &> \partial_{x_j}\phi(y) \text{  } \forall y \in \mathcal{D}\implies&& \phi(x_{\epsilon}^{i,j} )<\phi(x)  \text{  } \forall x \in \mathcal{D}_\epsilon^{i,j}.
\end{align*}
\end{lemma}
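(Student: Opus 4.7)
The plan is to reduce the inequality $\phi(x_{\epsilon}^{i,j}) > \phi(x)$ to a one-variable calculus statement along the Robin-Hood segment. Define the path $\gamma:[0,1]\to\mathbb{R}^n$ by
\[
\gamma(t) = (x_1,\dots,x_i - t\epsilon,\dots,x_j + t\epsilon,\dots,x_n),
\]
so that $\gamma(0)=x$ and $\gamma(1)=x_{\epsilon}^{i,j}$, and set $f(t) := \phi(\gamma(t))$. Provided the segment from $x$ to $x_{\epsilon}^{i,j}$ stays in $\mathcal{D}$ (which is the implicit content of working on a reasonable, e.g.\ convex, domain), $f$ is a well-defined $C^1$ function on $[0,1]$.

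The actual computation is then a single application of the chain rule:
\[
f'(t) = \epsilon\bigl(\partial_{x_j}\phi(\gamma(t)) - \partial_{x_i}\phi(\gamma(t))\bigr).
\]
Under the first hypothesis $\partial_{x_i}\phi < \partial_{x_j}\phi$ on $\mathcal{D}$, this gives $f'(t) > 0$ for every $t\in[0,1]$, and the fundamental theorem of calculus yields
\[
\phi(x_{\epsilon}^{i,j}) - \phi(x) \;=\; f(1)-f(0) \;=\; \int_0^1 f'(t)\,dt \;>\; 0,
\]
which is the desired inequality. The second implication is entirely symmetric: if $\partial_{x_i}\phi > \partial_{x_j}\phi$ on $\mathcal{D}$, the same path yields $f'<0$, hence $\phi(x_{\epsilon}^{i,j}) < \phi(x)$.

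The only subtlety, and thus the main (mild) obstacle, is the point-set check that the segment from $x$ to $x_{\epsilon}^{i,j}$ remains inside $\mathcal{D}$. This is automatic whenever $\mathcal{D}$ is convex, and in the intended applications $\mathcal{D}$ is an open box or a Weyl-type chamber $\{x_1>x_2>\cdots>x_n\}$, which is clearly stable under a Robin-Hood move between two coordinates once $\epsilon$ is smaller than $(x_i-x_j)/2$. Beyond that topological verification the argument is nothing more than a one-dimensional mean value theorem applied to the directional derivative of $\phi$ in the direction $e_j - e_i$.
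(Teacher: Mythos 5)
The paper does not prove Lemma~\ref{ostro}; it is cited verbatim from Marshall--Olkin--Arnold (Lemma~A.2 and Theorem~A.3 of~\cite{BMAJ}), so there is no in-paper argument to compare against. Your proof is the standard textbook argument for this result, and it is correct: parametrize the Robin-Hood move by $\gamma(t)=x-t\epsilon e_i+t\epsilon e_j$, compute $f'(t)=\epsilon\bigl(\partial_{x_j}\phi(\gamma(t))-\partial_{x_i}\phi(\gamma(t))\bigr)$, and integrate.

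You are also right to flag the only genuine subtlety, and it is worth stressing that it is more than cosmetic. As stated, $\mathcal{D}_\epsilon^{i,j}$ only requires the \emph{endpoints} $x$ and $x_\epsilon^{i,j}$ to lie in $\mathcal{D}$; for the integral argument to go through you need the entire segment $\{\gamma(t):t\in[0,1]\}$ to stay in $\mathcal{D}$. This is automatic when $\mathcal{D}$ is convex (or merely convex along the direction $e_j-e_i$), which covers both the reference's setting and every use of the lemma in this paper: in the proof of Theorem~\ref{NCM} the relevant domain $\mathcal{D}_{i_1,i_2}$ is cut out by linear constraints together with a strict inequality on the directional derivative, and $\epsilon_{\max}$ is chosen precisely so that the whole Robin-Hood segment up to $\epsilon_{\max}$ remains inside it; in the proof of Corollary~\ref{suffcor} the move is local, on a small ball around $\mu$. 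So your argument, together with the convexity caveat you make explicit, is a faithful and complete proof of the lemma as it is actually used.
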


\section{Necessary condition for majorization}

We will need a monotonicity lemma for weighted partial sums of roots. 
\begin{lemma} \label{lemmabis}
Let $0<r_1<r_2\dots<r_k$ and $\delta_1,\dots,\delta_k$ such that for an index $s_0$, $s_0<k$, $\sum_{i=1}^{s_0} \delta_i < 0$ and for all  $s<k$, $\sum_{i=1}^s \delta_i  \leq  0$. Then $\sum_{i=1}^k \delta_i < \sum_{i=1}^k \delta_i \frac{r_i}{r_k}  $. In particular, if in addition 
$\sum_{i=1}^k \delta_i r_i \leq 0$, then $\sum_{i=1}^k \delta_i < 0$.
\end{lemma}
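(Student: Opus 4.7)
The plan is to reduce the claim to a statement about the partial sums $T_s := \sum_{i=1}^s \delta_i$ via Abel summation. Setting $w_i := 1 - r_i/r_k$, the target inequality $\sum_i \delta_i < \sum_i \delta_i r_i/r_k$ becomes $\sum_{i=1}^k \delta_i w_i < 0$. The features of $(w_i)$ that will be used crucially are: (i) $w_k = 0$, and (ii) $w_i - w_{i+1} = (r_{i+1} - r_i)/r_k > 0$ for $1 \le i \le k-1$, since the $r_i$ are strictly increasing and positive.

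The next step is summation by parts. Writing $\delta_i = T_i - T_{i-1}$ with the convention $T_0 := 0$, a standard rearrangement gives
\[
\sum_{i=1}^k \delta_i w_i \;=\; T_k w_k - T_0 w_1 + \sum_{i=1}^{k-1} T_i (w_i - w_{i+1}) \;=\; \sum_{i=1}^{k-1} T_i (w_i - w_{i+1}),
\]
where the boundary term $T_k w_k$ vanishes precisely because of the normalization $w_k = 0$ (this is exactly why we divide by $r_k$ and not by some other positive quantity). Each factor $w_i - w_{i+1}$ is strictly positive by (ii), each $T_i$ is nonpositive for $1 \le i \le k-1$ by hypothesis, and $T_{s_0} < 0$ strictly at the distinguished index $s_0 \le k-1$. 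Consequently every summand is $\le 0$ and the $s_0$-th summand is strictly negative, so the whole sum is $< 0$, proving the first assertion.

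For the ``in particular'' part, the hypothesis $\sum_{i=1}^k \delta_i r_i \le 0$ divided by $r_k > 0$ gives $\sum_{i=1}^k \delta_i r_i/r_k \le 0$; combining with the strict inequality just established yields $\sum_{i=1}^k \delta_i < 0$, as desired.

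I do not expect a serious obstacle: the algebraic identity provided by Abel summation is forced by the hypotheses (partial-sum control on the $\delta_i$ plus monotonicity of the weights), and the sign analysis is immediate once the boundary term disappears. The only genuinely ``clever'' ingredient is choosing the weights $w_i = 1 - r_i/r_k$ so that $w_k = 0$; everything else is bookkeeping.
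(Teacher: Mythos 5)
Your proof is correct and is essentially the same as the paper's: both rearrange $\sum_i (r_k-r_i)\delta_i$ into $\sum_{l<k}(r_{l+1}-r_l)T_l$ (Abel summation) and then read off the sign; you merely normalize by $r_k$ first and name the weights $w_i$, while the paper carries the $r_k$ factor along.
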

\begin{proof}
Consider the following decomposition:
 \[
 \sum_{i=1}^k (r_k-r_i)\delta_i =\sum_{i=1}^{k-1} \sum_{l=i}^{k-1}(r_{l+1}- r_{l})\delta_i   =  \sum_{l=1}^{k-1} (r_{l+1}- r_{l}) \sum_{i=1}^{l} \delta_i.
 \]
As $(r_{l+1}- r_{l})>0$ and $\sum_{i=1}^{l} \delta_i \leq 0$ for all $l<k$, and one of them at least is negative, we obtain  $\sum_{i=1}^k (r_k-r_i)\delta_i< 0$. 
\end{proof}

Let us recall that we denote the vector of roots of $q$ by $\mu$ and the vector of roots of $p$ by $\lambda$, ordered in decreasing order. 
\begin{theorem}[Necessary condition] \label{NCM}
If $p$ and $q$ have a common interlacer and distinct roots, then
\[
p \succeq q \implies \sum_{i=1}^{i=k} \frac{p[\mu_i]}{q'[\mu_i]} < 0  \text{ } \forall k< n .
\]
\end{theorem}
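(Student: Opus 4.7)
My plan is to prove this by combining a sign decomposition of the residues $c_i := p(\mu_i)/q'(\mu_i)$ with an inductive application of Lemma~\ref{lemmabis}. First I would write each $c_i$ in sign-explicit form. Because $p$ and $q$ share a common interlacer $r$ with $r_1 > \cdots > r_{n-1}$, both $\mu_i$ and $\lambda_i$ lie in the interval $(r_i, r_{i-1})$ (with the convention $r_0 = +\infty$, $r_n = -\infty$). A factor-by-factor sign count in the products $p(\mu_i) = \prod_j (\mu_i - \lambda_j)$ and $q'(\mu_i) = \prod_{j \neq i}(\mu_i - \mu_j)$ shows that the $(-1)^{i-1}$ factors cancel and yield $c_i = (\mu_i - \lambda_i) w_i$ with a strictly positive weight $w_i = \prod_{j \neq i}|\mu_i - \lambda_j|/\prod_{j \neq i}|\mu_i - \mu_j|$. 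Hence the sign of $c_i$ is opposite to that of $\lambda_i - \mu_i$.

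The boundary cases follow immediately. For $k = 1$, majorization combined with distinctness forces $\lambda_1 > \mu_1$ strictly, so $c_1 < 0$. Symmetrically $c_n > 0$, and because the full sum $\sum_{i=1}^n c_i$ equals the $1/x$-coefficient of $p/q - 1$ at infinity, which vanishes as $\sum \lambda_i = \sum \mu_i$, the case $k = n-1$ reduces to $\sum_{i=1}^{n-1} c_i = -c_n < 0$. For the intermediate range $1 < k < n-1$ I would induct on $k$ using Lemma~\ref{lemmabis}: the induction hypothesis $\sum_{i=1}^s c_i < 0$ for $s<k$ matches the lemma's partial-sum condition on $\delta_i = c_i$, and to close the argument it suffices to produce a positive increasing weight sequence $r_i$ such that $\sum_{i=1}^k c_i\, r_i \leq 0$ is forced by the $k$-th majorization inequality $\sum_{i=1}^k (\mu_i - \lambda_i) \leq 0$. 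The natural candidate is $r_i = 1/w_i$, which turns $\sum c_i r_i$ directly into $\sum (\mu_i - \lambda_i)$.

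The main technical obstacle is the monotonicity of the $r_i$. The product-of-distances formula for $w_i$ does not manifestly make $1/w_i$ increasing in $i$, and in small explicit examples one can check that $1/w_i$ need not be monotone. The crux of the proof is therefore to either verify monotonicity of an appropriate variant of $1/w_i$ from the geometric constraint that the intervals $[r_i, r_{i-1}]$ are ordered and contain both $\lambda_i$ and $\mu_i$, or else to choose a different positive increasing weight sequence --- perhaps through an Abel-type rearrangement of the partial sum $\sum_{i=1}^k c_i$, or by exploiting an additional algebraic identity among the $c_i$ coming from the partial-fraction expansion of $p/q$ --- so that both the monotonicity and the inequality $\sum c_i r_i \leq 0$ hold simultaneously. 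I expect this weight-selection step, which fully uses the common-interlacer structure rather than just the residue signs, to be where the bulk of the technical work lies.
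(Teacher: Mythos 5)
Your sign analysis, your boundary cases $k=1$ and $k=n-1$, and your identification of Lemma~\ref{lemmabis} as the key tool all match the paper. But the gap you honestly flag at the end is real and your plan as stated does not close it. The weight choice $r_i = 1/w_i$ does force $\sum c_i r_i = \sum_{i\leq k}(\mu_i-\lambda_i)\leq 0$, but as you note $1/w_i$ is not monotone, and there is no obvious single fixed weight sequence that is simultaneously monotone and forces the needed sign on $\sum c_i r_i$ \emph{for the original} $\lambda$. The paper does not try to find such a sequence.

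What the paper does instead is a deformation argument. It picks indices $i_1<i_2$ both $>k$ with $\lambda_{i_1}>\mu_{i_1}$ and $\lambda_{i_2}<\mu_{i_2}$, and sets $r_i = \frac{1}{(\mu_i-\lambda_{i_1})(\mu_i-\lambda_{i_2})}$ for $i\leq k$. Because $i_1,i_2>k$, the quantities $\mu_i-\lambda_{i_1}$ and $\mu_i-\lambda_{i_2}$ are positive and decreasing in $i$ on $i\leq k$, so $r_i$ \emph{is} increasing --- but now $\sum_i\delta_i r_i\leq 0$ is no longer guaranteed. The paper's key move is a dichotomy: if $\sum\delta_i r_i\leq 0$, Lemma~\ref{lemmabis} plus the induction hypothesis finishes it; otherwise $\sum\delta_i r_i>0$ is precisely the condition under which $\partial_{\lambda_{i_1}}f_k < \partial_{\lambda_{i_2}}f_k$, so by the local Schur--Ostrowski criterion (Lemma~\ref{ostro}) a Robin--Hood transfer $\lambda_{i_1}\mapsto\lambda_{i_1}-\epsilon$, $\lambda_{i_2}\mapsto\lambda_{i_2}+\epsilon$ strictly increases $f_k$ while preserving $\lambda\succeq\mu$ and the common interlacer. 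Iterating this squeezing until no such $(i_1,i_2)$ remain yields a terminal $\lambda^{\mathrm{end}}$ with $\mu_r\geq\lambda_r^{\mathrm{end}}$ for all $r>k$, hence $\delta_r^\mu(\lambda^{\mathrm{end}})\geq 0$ on that range, and since the full sum of residues vanishes, $f_k(\lambda^{\mathrm{end}})=-\sum_{r>k}\delta_r\leq 0$. As $f_k(\lambda)<f_k(\lambda^{\mathrm{end}})$, the strict inequality follows. In short: where you hoped for one correct weight sequence, the paper accepts a ``wrong'' but monotone one and uses the failure of $\sum\delta_i r_i\leq 0$ as a signal that $\lambda$ can still be pushed toward $\mu$ while increasing $f_k$. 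That iterative step, together with the Schur--Ostrowski machinery, is the missing idea in your proposal.
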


\begin{proof}
 All along the vector of roots $\mu$ will be fixed. Denote by :
\[
f_k^{\mu}(\lambda_1,\dots.,\lambda_n)= \sum_{i=1}^{i=k} \frac{p[\mu_i]}{q'[\mu_i]} =  \sum_{i=1}^{i=k} \frac{\prod_{l=1} ^n(\mu_i - \lambda_l)}{\prod_{l \neq i, l=1}^{n}(\mu_i-\mu_l)} =   \sum_{i=1}^{i=k}\delta_i^{\mu}(\lambda)
\]
where for all $i=1\dots n$, 
\begin{align*}
\delta_i^{\mu}(\lambda)&:=  \frac{\prod_{l=1} ^n(\mu_i - \lambda_l)}{\prod_{l \neq i, l=1}^{n}(\mu_i-\mu_l)}.
\end{align*}
Notice that by the common interlacer assumption, $(\mu_i- \lambda_l)(\mu_i-\mu_l)>0$ for all $l,i=1 \dots n$(they are ordered by pairs into disjoint intervals, see Definition~\ref{interl}). \\
Therefore $\sign(\delta_i^{\mu}(\lambda))= \sign(\mu_i - \lambda_i)$.

We prove the inequality by induction on $k$. The case $k=1$ is more or less straightforward as by majorization $\lambda_1 \geq  \mu_1$, and the sign of the fraction $\delta_1^{\mu}(\lambda)$ is the sign of $(\mu_1-\lambda_1)$: $\delta_1^{\mu}(\lambda)<0$.

Fix $k$. Assume it is true for all $q<k$. We will operate transformations on $\lambda$ that preserve the common interlacing and the majorization properties. 
Now, an easy computation leads, for $1\leq i_1\leq n$, to
\[
\frac{\partial f_k^{\mu}}{\partial \lambda_{i_1}}=   -\sum_{i=1}^k \frac{\delta_i^{\mu}(\lambda)}{\mu_i-\lambda_{i_1}},
\]
and then for all $1\leq i_1,i_2\leq n$:
\[
\frac{\partial f_k^{\mu}}{\partial \lambda_{i_1}} -\frac{\partial f_k^{\mu}}{\partial \lambda_{i_2}} = (\lambda_{i_2} -\lambda_{i_1})  \sum_{i=1}^{i=k} \frac{\delta_i^{\mu}(\lambda)}{(\mu_i-\lambda_{i_1})(\mu_i-\lambda_{i_2})}.
\]
 As long as we can find two indices,$i_1$,$i_2$, with  $i_2>i_1>k$  (therefore by assumption $\lambda_{i_1}> \lambda_{i_2}$) and:
\begin{align}\label{cond}
 \lambda_{i_1}&>\mu_{i_1} &\text{and}&&  \lambda_{i_2}&<\mu_{i_2},
 \end{align}
 do the following:  for all $1\leq i\leq k$, $r_i:= \frac{1}{(\mu_i-\lambda_{i_1})(\mu_i-\lambda_{i_2})} >0$ and $r_1<r_2\dots<r_s$  due to the assumption that the indices $i_1,i_2>k$. 
Then there is a dichotomy.\\
\begin{enumerate}
 \item If $\sum_{i=1}^k \delta_i^{\mu}(\lambda) r_i \leq 0$ we can use Lemma \ref{lemmabis} to conclude directly that $f_k(\lambda_1,\dots.,\lambda_n)= \sum_{i=1}^k \delta_i^{\mu}(\lambda) \leq 0$ as by induction  $\sum_{i=1}^s\delta_i^{\mu}(\lambda) \leq 0$ for all $s<k$ and $\delta_1^{\mu}(\lambda)<0$ .\\
 
\item  If $\sum_{i=1}^k\delta_i^{\mu}(\lambda) r_i > 0$,  then consider vectors $x \in \mathbb{R}^n$ that have all coordinates constant equal to $x_j= \lambda_j$ except for $j=i_1,i_2$. Define the set of admissible coordinates as $\mathcal{D}_{i_1,i_2}= \{x,  \text{where  }  x_{i_1}>\mu_{i_1},  x_{i_2}<\mu_{i_2} \text{ and }\frac{\partial f_k^{\mu}}{\partial x_{i_1}}(x) <\frac{\partial f_k^{\mu}}{\partial x{i_2}}(x)\}$. Notice that $\mathcal{D}_{i_1,i_2}$ is open when restricted to the subspace of the two moving coordinates space. As:
 \begin{align*}
 \lambda_{i_1}&>\mu_{i_1},     & \lambda_{i_2}&<\mu_{i_2} &\text{and} &&\frac{\partial f_k^{\mu}}{\partial \lambda_{i_1}}(\lambda)<\frac{\partial f_k^{\mu}}{\partial \lambda_{i_2}}(\lambda)
 \end{align*}
 by assumption, then $\lambda \in \mathcal{D}_{i_1,i_2}$. 
 Consider $ \lambda^{\epsilon}= (\lambda_1,\dots, \lambda_{i_1}-\epsilon, \dots, \lambda_{i_2}+\epsilon,\dots, \lambda_n)$, and $\epsilon_{max}= \sup\{\epsilon, \text{ such that }  \lambda^{\epsilon}\in  \mathcal{D}_{i_1,i_2}\} $.  So we squeeze the vector of $\lambda$ to make it closer to $\mu$ by a Robin Hood operation (see Definition~\ref{Robin}). According to Lemma~\ref{ostro}, we have:
 \[
 f_k^{\mu}(\lambda^{\epsilon_{max}})> f_k^{\mu}(\lambda).
 \]

  By continuity at the extreme point of $\mathcal{D}_{i_1,i_2}$ , either $\frac{\partial f_k^{\mu}}{\partial \lambda_{i_1}}(\lambda^{\epsilon_{max}})=\frac{\partial f_k^{\mu}}{\partial \lambda_{i_2}}(\lambda^{\epsilon_{max}})$ and in this  case, we are brought back to case 1) and we leave the loop, the inequality is proven. Or if $\lambda^{\epsilon_{max}}_{i_1}=\mu_{i_1}$ or $ \lambda^{\epsilon_{max}}_{i_2}=\mu_{i_2} $ we replace  $\lambda$ by $ \lambda^{\epsilon_{max}}$ and we start again the dichotomy with new indices $i_1$ and $i_2$. 
 \end{enumerate}
Call $\lambda^{end}$ the transformed vector we get at the end. Note that as at all steps $\lambda^{\epsilon_{max}} \succeq \mu$ by construction, it his therefore true for the final vector: $\lambda^{end} \succeq \mu$. Also the roots stay in disjoint intervals all along so the property of having a common interlacer is kept as an invariant. The negation of inequalities from Equation~\ref{cond} for all indices larger than $k$ means that we can't squeeze $\lambda^{end}$ more to be closer to $\mu$ and given that  $\lambda^{end} \succeq \mu$, we necessarily have: $\mu_r \geq \lambda_r^{end}$ for all $r \in \left[|k+1,n\right|]$ and this  leads to $\delta_r^{\mu}(\lambda^{end}) \geq 0$ on this range. \\
Finally, as $ \sum_{i=1}^n \delta_i^{\mu}(\lambda^{end})= \sum_{i=1}^n \mu_i -  \sum_{i=1}^n \lambda_i^{end} =0 $ by the majorization property, we deduce that 
 $f_k^{\mu}(\lambda^{end})= -  \sum_{r=k+1}^n\delta_i^{\mu}(\lambda^{end}) \leq 0$. As $f_k^{\mu}(\lambda)< f_k^{\mu}(\lambda^{end})$ by construction, we get the desired inequality(note that if $k=n$, such a process would not be possible). 

\end{proof}

\section{A sufficient and necessary condition for strong majorization }
In this section, we prove results related to strong majorization that also lead to a sufficient characterization in terms of residues for simple majorization. Denote by $\lambda_i(t)$ the roots of  $p_t:= t p+(1-t)q$. We have $\lambda_i(0)=\mu_i$ and $\lambda_i(1)=\lambda_i$.  Also, for $k\leq n$, recall that $S_k(t)= \sum_{i=1}^k \lambda_i(t)$.
\begin{lemma}[Sufficient condition for strong majorization]\label{strictineq} Assume that $p$ and $q$ have a common interlacer and that  $\sum_{i=1} \lambda_i=\sum_{i=1}^n \mu_i$.
Then: 
\[
 \forall  k<n, \sum_{i=1}^{i=k} \frac{q[\lambda_i]}{p'[\lambda_i]} > 0 \implies  \forall k<n, \forall t\in[0,1], \frac{dS_k(t)}{dt}>0  \implies   p \succeq^s q.
\]
\end{lemma}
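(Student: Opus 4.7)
The second implication, $(\forall k<n,\forall t\in[0,1],\ \frac{dS_k(t)}{dt}>0)\Rightarrow p\succeq^s q$, is essentially a restatement of the definition. The hypothesis $\sum_i\lambda_i=\sum_i\mu_i$ forces $S_n(t)\equiv\sum_i\mu_i$; strict monotonicity of the $S_k$ for $k<n$ then gives $S_k(s)\geq S_k(t)$ for all $k\leq n$ whenever $s\geq t$, which is exactly $p_s\succeq p_t$.

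The work is entirely in the first implication. The first step is to differentiate $p_t(\lambda_i(t))=0$ in $t$. Using $\partial_t p_t=p-q$ together with the identity $p[\lambda_i(t)]=-\tfrac{1-t}{t}q[\lambda_i(t)]$ obtained from $p_t(\lambda_i(t))=0$, one gets the two equivalent expressions
\[
\frac{dS_k(t)}{dt}=\frac{1}{t}\sum_{i=1}^{k}\frac{q[\lambda_i(t)]}{p_t'[\lambda_i(t)]}=-\frac{1}{1-t}\sum_{i=1}^{k}\frac{p[\lambda_i(t)]}{p_t'[\lambda_i(t)]},
\]
valid on $(0,1)$ and extending continuously to the endpoints. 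In particular, at $t=1$ one obtains $\frac{dS_k}{dt}(1)=\sum_{i=1}^k q[\lambda_i]/p'[\lambda_i]>0$ by the hypothesis (A), and continuity yields a whole neighbourhood of $1$ on which every $\frac{dS_k}{dt}$ is strictly positive.

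The engine is a contradiction argument based on the necessary condition from Section~2. Suppose (B) fails and set
\[
t^{*}:=\sup\bigl\{t\in[0,1]:\exists k<n,\ \tfrac{dS_k(t)}{dt}\leq 0\bigr\}.
\]
Strict positivity at $t=1$ forces $t^{*}<1$, and by the definition of the supremum plus continuity one extracts an index $k_0<n$ for which $\frac{dS_{k_0}}{dt}(t^{*})=0$, while $\frac{dS_{k'}}{dt}>0$ on $(t^{*},1]$ for every $k'<n$. Integrating the latter yields $S_{k'}(1)>S_{k'}(t^{*})$ for all $k'<n$, and since the $n$-th partial sum is conserved, this is precisely the majorization $p\succeq p_{t^{*}}$.

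Finally the plan is to apply Theorem~\ref{NCM} to the pair $(p,p_{t^{*}})$. A common interlacer is automatic: by Lemma~\ref{commoninterl} any common interlacer $r$ of $p$ and $q$ also interlaces every $p_t$. The distinctness of roots of $p$ and $p_{t^{*}}$ is where care is needed; this is the main technical hurdle. It follows from the fact that the roots of $p_t$ are confined to pairwise disjoint open intervals cut out by $r$, so $\lambda_i(t^{*})\neq\lambda_j$ for $i\neq j$, together with the strict monotonic motion of each $\lambda_i(t)$ and $t^{*}<1$, which guarantees $\lambda_i(t^{*})\neq\lambda_i$. Once this is verified, Theorem~\ref{NCM} gives $\sum_{i=1}^{k_0}p[\lambda_i(t^{*})]/p_{t^{*}}'[\lambda_i(t^{*})]<0$, in direct contradiction with the vanishing of the second residue expression at $t=t^{*}$ for $k=k_0$.
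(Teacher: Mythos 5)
Your proof is correct and follows essentially the same route as the paper: differentiate $p_t(\lambda_i(t))=0$ to get the two residue expressions for $dS_k/dt$, use continuity to get strict positivity near $t=1$, take the extremal $t^*$ where a derivative vanishes, deduce $p\succeq p_{t^*}$ by integration, and contradict Theorem~\ref{NCM}. One small remark: your distinctness step appeals to \emph{strict} monotone motion of each $\lambda_i(t)$, which is slightly more than Lemma~\ref{commoninterl} literally gives; the paper's more direct argument is that $\lambda_i(t^*)=\lambda_i$ together with $p_{t^*}(\lambda_i(t^*))=0$ and $t^*<1$ would force $q(\lambda_i)=0$, contradicting that $p$ and $q$ share no roots.
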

\begin{proof}
Let us look at the equations of evolution of the partial sums of roots with respect to $t$. 
The goal is to prove that they are all increasing.

  Let's differentiate the equalities:
\begin{equation}\label{zeroeq}
\big( t p+(1-t)q\big) [\lambda_i(t)] =p_{t} [\lambda_i(t)] =0,
\end{equation}
 we get for $0< t<1$:
\begin{equation}\label{derivroots}
\frac{d \lambda_i(t)}{dt}= \frac{(q-p)}{p'_t} [ \lambda_i(t)]= \frac{1}{t}\frac{q}{p'_t} [ \lambda_i(t)] =  \frac{-1}{1-t}\frac{p}{p'_t} [ \lambda_i(t)].
\end{equation}
We then have:
 \[
 \frac{dS_k(t)}{dt}=\sum_{i=1}^k\frac{d \lambda_i(t)}{dt} = \frac{1}{t} \sum_{i=1}^k \frac{q}{p'_t} [ \lambda_i(t)].
 \]
 By assumption, we have, for $k<n$:
 \[
  \frac{dS_k(t)}{dt}|_{t=1}=\sum_{i=1}^k \frac{q}{p'} [\lambda_i]>0.
 \]
 We want to show that $\frac{dS_k(t)}{dt}\geq 0$  for all $t \in [0,1]$. By continuity of the functions involved with respect to $t$ (roots depend continuously on the coefficients and therefore on $t$, see Equation~\ref{zeroeq}) , for $t$ close to $1$, and for all $k<n$: 
\[
  \frac{dS_k(t)}{dt} >0.
\]
Now assume by contradiction that sums  $S_k(t)$ are not all increasing, then one  $\frac{dS_k(t)}{dt}$ has to become negative. that is there exist  $k_0<n$  and  $0\leq t_0<1$, such that $\frac{dS_k(t)}{dt}(t_0)=0$(which has to happen by continuity and using the intermediate value theorem). Assume that $t_0:= \max \{ 0<t<1, \text{ such that } \frac{dS_k(t)}{dt}(t_0)=0 \text{ for any } k<n\} $. As we also have, using Equation~\ref{derivroots}:
\[
  \frac{dS_k(t)}{dt}= \frac{-1}{1-t}\sum_{i=1}^k\frac{p}{p'_t} [ \lambda_i(t)],
\]
 we eventually get:
  \begin{align*}
 \sum_{i=1}^{k_0} \frac{p}{p'_{t_0}} [ \lambda_i(t_0)] &=0    &\text{and}&&   \sum_{i=1}^{k} \frac{p}{p'_{t_0}} [ \lambda_i(t_0)]  \leq 0  \text{ for }  k\neq k_0.
 \end{align*}
  Also  notice that as $\frac{dS_k(t)}{dt}=\sum_{i=1}^k \frac{d \lambda_i(t)}{dt}> 0$ for all $k<n$ and all $t \in]t_0,1]$, and by assumption $S_n(t)=S_n(1)$,  then there is continuous majorization between $p_{t_0}$ and $p$ (partial sums of roots are all increasing), and in particular $p \succeq p_{t_0}$. \\
  As  the largest roots of $p$ and $q$ are distinct, similarly  the largest roots of $p$ and $p_{t_0}$ are distinct. Indeed, $p_{t_0}=t_0p+(1-t_0)q$ so for $t_0<1$, if the largest root of $p_{t_0}$ was the largest of $p$, it would also be a root of $q$ which is impossible (by the disposition into intervals of the roots, it would also be the largest root of $q$). \\
 The polynomials $p$ and $p_{t_0}$ also have a common interlacer. Indeed if a polynomial $r$ interlaces both $p$ and $q$ then it also interlaces both $p_{t_0}=t_0p+(1-t_0)q$  and $p$ (see Lemma~\ref{commoninterl}).\\
  We can see that this situation is impossible following Theorem~\ref{NCM} applied to $p$ and $p_{t_0}$. \\
 We conclude that there is a contradiction. Finally, for all $k<n$ and all $t \in [0,1]$, $\frac{dS_k(t)}{dt}>0$. Therefore we get the desired strong majorization. Note that we couldn't have done this at $t_0=1$, because of the factor $\frac{1}{1-t}$; that's why we need strict inequalities at $t=1$ to get rid of this singularity and to be able to use both equalities with $q$ or $p$ on top of the denominator (and go from one to the other). Also note that for $t=0$, we retrieve the inequalities from
\end{proof}
To extend this result to large inequalities, we will need a variant of Lemma~\ref{lemmabis} where inequalities are reversed.
\begin{lemma} \label{lemmabispos}
Let $r_1>r_2\dots>r_k>0$ and $\delta_1,\dots,\delta_k$ such that for an index $s_0$, $s_0<k$, $\sum_{i=1}^{s_0} \delta_i > 0$ and for all  $s \leq k$, $\sum_{i=1}^s \delta_i  \geq  0$. Then $\sum_{i=1}^k \delta_i r_i > \sum_{i=1}^k \delta_i r_k   $. In particular, $\sum_{i=1}^k \delta_ir_i >0$.
\end{lemma}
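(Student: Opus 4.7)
The plan is to mirror the proof of Lemma~\ref{lemmabis}, adapting it to the reversed order of the $r_i$. The key algebraic identity is the same Abel summation rewrite: starting from $\sum_{i=1}^k (r_i-r_k)\delta_i$, I would write $r_i - r_k = \sum_{l=i}^{k-1}(r_l - r_{l+1})$ and interchange the order of summation to obtain
\[
\sum_{i=1}^k (r_i - r_k)\delta_i \;=\; \sum_{i=1}^{k-1}\sum_{l=i}^{k-1}(r_l-r_{l+1})\delta_i \;=\; \sum_{l=1}^{k-1}(r_l-r_{l+1})\sum_{i=1}^{l}\delta_i.
\]

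Next I would read off the sign. Because the sequence $r_1>r_2>\dots>r_k$ is strictly decreasing, every factor $r_l - r_{l+1}$ is strictly positive for $l<k$. By hypothesis each partial sum $\sum_{i=1}^l \delta_i$ is nonnegative for $l\leq k$, and at least one — namely the one at $l=s_0$ — is strictly positive. Therefore the right-hand side above is strictly positive, giving $\sum_{i=1}^k (r_i-r_k)\delta_i > 0$, i.e.\ $\sum_{i=1}^k \delta_i r_i > r_k \sum_{i=1}^k \delta_i$.

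Finally, to deduce the ``in particular'' statement, I would apply the assumption at $s=k$, which says $\sum_{i=1}^k \delta_i \geq 0$, combined with $r_k>0$. This yields $r_k \sum_{i=1}^k \delta_i \geq 0$, and chaining the strict inequality above gives $\sum_{i=1}^k \delta_i r_i > 0$, as required.

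I do not expect any genuine obstacle: the argument is a direct Abel-summation computation, and the only subtlety is bookkeeping the sign reversal — here $r_l - r_{l+1}$ replaces the $r_{l+1}-r_l$ of Lemma~\ref{lemmabis} and $\sum_{i=1}^l\delta_i \geq 0$ (with one strict) replaces the $\leq 0$ (with one strict) hypothesis, so the product of signs is preserved and the conclusion flips from $<0$ to $>0$ exactly as stated.
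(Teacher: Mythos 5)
Your proof is correct and follows essentially the same route as the paper: the same Abel-summation rearrangement $\sum_{i=1}^k (r_i-r_k)\delta_i = \sum_{l=1}^{k-1}(r_l-r_{l+1})\sum_{i=1}^l\delta_i$, the same sign analysis, and the same observation that $\sum_{i=1}^k\delta_i\geq 0$ with $r_k>0$ gives the ``in particular'' conclusion. (You even correct a small sign typo in the paper's statement of the factor, which reads $(r_{l+1}-r_l)>0$ where it should read $(r_l-r_{l+1})>0$.)
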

\begin{proof}
Consider the following decomposition:
 \[
 \sum_{i=1}^k (r_i-r_k)\delta_i =\sum_{i=1}^{k-1} \sum_{l=i}^{k-1}(r_{l}- r_{l+1})\delta_i   =  \sum_{l=1}^{k-1} (r_{l}- r_{l+1}) \sum_{i=1}^{l} \delta_i.
 \]
As  $\forall l<k$, $(r_{l+1}- r_{l})>0$ and $\sum_{i=1}^{l} \delta_i \geq 0$ f  and one of them is positive, we obtain  $\sum_{i=1}^k (r_i-r_k)\delta_i > 0$. 

\end{proof}

\begin{cor} [extension to large inequalities]\label{suffcor} Assume $p$ and $q$ share a common interlacer and that $ \sum_{i=1}^{n}\lambda_i= \sum_{i=1}^{n}\mu_i $.

Then:
\[
\forall k<n, \sum_{i=1}^{i=k} \frac{q[\lambda_i]}{p'[\lambda_i]} \geq 0 \implies p \succeq^s q.
\]
\end{cor}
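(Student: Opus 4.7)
The plan is to reduce the large-inequality hypothesis to the strict one treated by Lemma~\ref{strictineq}, and then pass to a limit, using that strong majorization is a closed property under continuous deformation of the polynomials.

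First I would perturb $q$ to $q_\epsilon := q + \epsilon \phi$, where $\phi$ is a polynomial of degree at most $n-2$ designed so that $\phi[\lambda_i]/p'[\lambda_i] = a_i$ for prescribed real numbers $a_1,\dots,a_n$ satisfying $\sum_{i=1}^k a_i > 0$ for every $k<n$ and $\sum_{i=1}^n a_i = 0$ (e.g.\ $a_1 = \dots = a_{n-1} = 1$, $a_n = -(n-1)$). The Lagrange form $\phi(x) = \sum_i a_i \prod_{j\neq i}(x-\lambda_j)$ does the job, and the condition $\sum_i a_i = 0$ forces $\deg \phi \leq n-2$. I would then verify that, for $\epsilon>0$ small: (i) $q_\epsilon$ has $n$ simple real roots $\mu_i^\epsilon$ close to the $\mu_i$ (implicit function theorem applied to a polynomial with simple roots), (ii) $\sum_i \mu_i^\epsilon = \sum_i \mu_i = \sum_i \lambda_i$ because the $x^{n-1}$-coefficient of $q_\epsilon$ coincides with that of $q$, and (iii) $p$ and $q_\epsilon$ still share a common interlacer, since the distinct-roots assumption makes the interlacing intervals of $p$ and $q$ strictly separated, and this separation is open under small perturbations.

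With these verifications in place, linearity gives
\[
\sum_{i=1}^k \frac{q_\epsilon[\lambda_i]}{p'[\lambda_i]} \;=\; \sum_{i=1}^k \frac{q[\lambda_i]}{p'[\lambda_i]} \;+\; \epsilon \sum_{i=1}^k a_i \;>\; 0 \qquad \text{for every } k<n,
\]
so Lemma~\ref{strictineq} applied to the pair $(p, q_\epsilon)$ yields $p \succeq^s q_\epsilon$ for all sufficiently small $\epsilon>0$. Letting $\epsilon \to 0$, the coefficients of $p_t^\epsilon := tp + (1-t)q_\epsilon$ depend continuously on $(t,\epsilon)$, hence so do the roots and their partial sums $S_k^\epsilon(t)$. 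The nondecreasing property of each $S_k^\epsilon(\cdot)$ on $[0,1]$ survives pointwise convergence, so each $S_k(\cdot)$ is nondecreasing as well, producing $p \succeq^s q$.

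The delicate step is the construction of $\phi$: one has to strictly improve all $n-1$ partial-sum-of-residues inequalities while simultaneously preserving the common interlacer and the total sum of roots. The Lagrange basis $\prod_{j\neq i}(x-\lambda_j)$ supplies exactly $n$ free directions, and the single linear constraint $\sum_i a_i = 0$ -- which both enforces $\deg \phi \leq n-2$ and preserves $\sum_i \mu_i^\epsilon$ -- leaves precisely the $n-1$ dimensions needed to raise the $n-1$ partial sums strictly.
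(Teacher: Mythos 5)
Your proof is correct, and it takes a genuinely different route from the paper. The paper's argument first locates the minimal index $k_0$ where a partial sum vanishes, establishes a sign pattern for the residues around $k_0$, differentiates the complementary sums $f_k$ and $g_k$, invokes Lemma~\ref{lemmabispos} to pin down the sign of the difference of partial derivatives $\partial_{\mu_1} f_k - \partial_{\mu_{k_0}} f_k$, and then performs a Robin--Hood transfer between the two roots $\mu_1$ and $\mu_{k_0}$, using the local Schur--Ostrowski criterion (Lemma~\ref{ostro}) to show this strictly improves the vanishing partial sums while continuity preserves the positive ones. You instead perturb $q$ \emph{additively} by $\epsilon\phi$ with $\phi$ in Lagrange form, exploiting the identity $\phi[\lambda_i]/p'[\lambda_i]=a_i$: the residues then change linearly and \emph{independently} in the prescribed $a_i$, so a single elementary choice such as $a_1=\cdots=a_{n-1}=1$, $a_n=-(n-1)$ makes \emph{all} the partial sums strictly positive in one stroke, with the constraint $\sum_i a_i=0$ simultaneously forcing $\deg\phi\le n-2$ and preserving the trace. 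This sidesteps the Schur--Ostrowski machinery entirely and avoids having to pick the right index pair. What the paper's root-transfer approach buys is that real-rootedness and the common interlacer are automatic (you literally move two roots inside their intervals), whereas you must separately verify these are stable under a small coefficient perturbation; your justification via simplicity of roots and the strict disjointness of the interlacing intervals (which indeed follows from the standing distinct-roots assumption) is correct but worth spelling out, and you should also note that $p$ and $q_\epsilon$ retain distinct roots, since Lemma~\ref{strictineq} implicitly relies on that. Your limiting argument (pointwise monotonicity of $S_k^\epsilon(\cdot)$ passes to $S_k(\cdot)$ as $\epsilon\to0$ by continuity of roots in coefficients) matches the paper's closing step.
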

\begin{proof}
If the partial sums $\sum_{i=1}^{i=k} \frac{q[\lambda_i]}{p'[\lambda_i]}$ are all  positive for $1<k<n$ then we are brought back to the case of Lemma~\ref{strictineq}. So we can assume that at least one is zero. The goal is to show that we can approximate $q$ by some perturbed polynomial  $q_{\epsilon}$ for which we can apply the previous lemma and then make the $\epsilon$ perturbation go to zero. 
Denote by:
\[
 k_0= \min \{k, 1<k<n \text{ such that } \sum_{i=1}^{i=k} \frac{q[\lambda_i]}{p'[\lambda_i]} = 0\} .
 \] The case $k=1$ is impossible  because it would mean that the largest roots are the same (which we excluded). Similarly we necessarily have  $\frac{q[\lambda_i]}{p'[\lambda_i]} \neq 0$, for all $i \leq n$. So It means that  $\frac{q[\lambda_{k_0}]}{p'[\lambda_{k_0}]}<0$, and in particular, given that $\sign(\frac{q[\lambda_{k_0}]}{p'[\lambda_{k_0}]})= \sign(\lambda_{k_0}-\mu_{k_0})$, that: $\mu_{k_0}>\lambda_{k_0}$. We can't have $k_0=n-1$  as $ \sum_{i=1}^{i=n} \frac{q[\lambda_i]}{p'[\lambda_i]} = 0$, and it would lead to $\lambda_n=\mu_n$. So we necessarily have: $\frac{q[\lambda_{k_0+1}]}{p'[\lambda_{k_0+1}]}> 0$, as $\sum_{i=1}^{i=k_0+1} \frac{q[\lambda_i]}{p'[\lambda_i]} \geq 0$.

Denote by :
\begin{align*}
f_k^{\lambda}(\mu_1,\dots.,\mu_n)&:= \sum_{i=1}^{i=k} \frac{q[\lambda_i]}{p'[\lambda_i]}   &\text{and}&&    g_k^{\lambda}(\mu_1,\dots.,\mu_n)&:= \sum_{i=k+1}^{i=n} \frac{q[\lambda_i]}{p'[\lambda_i]}.
\end{align*}

Now, for general vectors $\lambda$ and $\mu$ (let us not assume that there is majorization):
 \begin{align*}
 f_k^{\lambda}(\mu)+  g_k^{\lambda}(\mu)= \sum_{i=1}^n (\lambda_i - \mu_i) \implies  \frac{\partial (f_k^{\lambda} +g_k^{\lambda})}{{\partial \mu_l}} = -1   \implies  \frac{\partial( f_k^{\lambda} +g_k^{\lambda})}{{\partial \mu_{l_1}}} - \frac{\partial (f_k^{\lambda} +g_k^{\lambda})}{{\partial \mu_{l_2}}}   =0,
 \end{align*}
for all indices $l_1$ and $l_2$, or put otherwise:
\[
\frac{\partial f_k^{\lambda}}{{\partial \mu_{l_1}}} - \frac{\partial f_k^{\lambda}}{{\partial \mu_{l_2}}}  = \frac{\partial g_k^{\lambda}}{{\partial \mu_{l_2}}} - \frac{\partial g_k^{\lambda}}{{\partial \mu_{l_1}}} .
\]
 We have $\mu_{1}< \lambda_{1}$ by assumption, and
\[
\frac{\partial g_k^{\lambda}}{\partial \mu_{1}} -\frac{\partial g_k^{\lambda}}{\partial \mu_{k_0}} = (\mu_{k_0} -\mu_{1})  \sum_{i=k+1}^{n}  \frac{q[\lambda_i]}{p'[\lambda_i]}  \frac{1}{(\lambda_i-\mu_{1})(\lambda_i-\mu_{k_0})}.
\]
We notice that for $i > k_0$, if we denote by $r_i: = \frac{1}{(\lambda_i-\mu_{1})(\lambda_i-\mu_{k_0})}$, then $ r_{k_0+1}> r_{k_0+2} >\dots.>r_n>0$ . So as by assumption $f_r^{\lambda}(\mu) \geq 0$ for all $r$ and  $f_{k_0}^{\lambda}(\mu)= \sum_{i=1}^{k_0} \frac{q[\lambda_i]}{p'[\lambda_i]} = 0$ then it also means that  $\sum_{i=k_0+1}^{l} \frac{q[\lambda_i]}{p'[\lambda_i]} \geq 0$ for $l>k_0$. Using Lemma~\ref{lemmabispos}  and  : $\frac{q[\lambda_{k_0+1}]}{p'[\lambda_{k_0+1}]}> 0$, we get that:   $\sum_{i=k_0+1}^{n}  \frac{q[\lambda_i]}{p'[\lambda_i]}  \frac{1}{(\lambda_i-\mu_{1})(\lambda_i-\mu_{k_0})} >0$. We conclude (as $(\mu_{k_0} -\mu_{1})  <0$) that:
\begin{align*}
\frac{\partial g_{k_0}^{\lambda}}{\partial \mu_{1}}(\mu) -\frac{\partial g_{k_0}^{\lambda}}{\partial \mu_{k_0}}(\mu) &<0 & \text{and}&& \frac{\partial f_{k_0}^{\lambda}}{\partial \mu_{1}}(\mu) -\frac{\partial f_{k_0}^{\lambda}}{\partial \mu_{k_0}}(\mu) &>0.
\end{align*}
Exactly the same way, if $k$ is an other index (larger)  such that $\sum_{i=1}^{i=k} \frac{q[\lambda_i]}{p'[\lambda_i]} = 0$ and which is not equal to $n$, we have that for $i > k$, if we denote by $r_i: = \frac{1}{(\lambda_i-\mu_{1})(\lambda_i-\mu_{k_0})}$, then $ r_{k+1}> r_{k+2} >\dots.>r_n>0$ . So as by assumption $f_r^{\lambda}(\mu) \geq 0$ for all $r$ and  $f_{k}^{\lambda}(\mu)=\sum_{i=1}^{k} \frac{q[\lambda_i]}{p'[\lambda_i]} = 0$ then it also means that  $\sum_{i=k+1}^{l} \frac{q[\lambda_i]}{p'[\lambda_i]} \geq 0$ for $l>k$. Using Lemma~\ref{lemmabispos}  and  : $\frac{q[\lambda_{k+1}]}{p'[\lambda_{k+1}]}> 0$, we get that:   $\sum_{i=k+1}^{n}  \frac{q[\lambda_i]}{p'[\lambda_i]}  \frac{1}{(\lambda_i-\mu_{1})(\lambda_i-\mu_{k})} >0$. We conclude (as $(\mu_{k_0} -\mu_{1})  <0$) that:
\begin{align*}
\frac{\partial g_{k}^{\lambda}}{\partial \mu_{1}}(\mu) -\frac{\partial g_{k}^{\lambda}}{\partial \mu_{k_0}}(\mu)& <0 &\text{and}&& \frac{\partial f_{k}^{\lambda}}{\partial \mu_{1}}(\mu) -\frac{\partial f_{k}^{\lambda}}{\partial \mu_{k_0}}(\mu)& >0.
\end{align*}
We do a small Robin Hood transfer of weight from $\mu_1$ to $\mu_{k_0}$, that is we define:
 \begin{align*}
 \mu_1^{\epsilon}&= \mu_1- \epsilon, & \mu_{k_0}^{\epsilon}&= \mu_{k_0}+ \epsilon  &\text{and}&&    \mu_{i}^{\epsilon}&= \mu_i \text{ }\forall i \neq 1,k_0.
 \end{align*}
 We denote by $q_{\epsilon}$ the polynomial whose roots are $\mu_i^{\epsilon}$.  Then for $\epsilon$ small enough, $q_{\epsilon}$  and $p$ will still have a common interlacer, and 
by continuity, if $f_k(\mu)>0$ then $f_k(\mu_{\epsilon})>0$ also. Additionally, if $f_k(\mu)=0$ then using the above inequality $\frac{\partial f_{k}^{\lambda}}{\partial \mu_{1}}(\mu)>\frac{\partial f_{k}^{\lambda}}{\partial \mu_{k_0}}(\mu)$ and Lemma~\ref{ostro} with $i=1$ and $j=k_0$ on a small domain $D$ around $\mu$, we get $f_k(\mu_{\epsilon})> f_k(\mu)=0$. Then we have strict inequalities for $q_{\epsilon}$ and we can apply the previous Lemma~\ref{suffcor} and if we denote by   $\lambda_i(t,\epsilon)$ the roots of $tp+(1-t)q_{\epsilon}$, we get for $0 \leq t<s \leq 1$ and all $k<n$:

\[
 \sum_{i=1}^k  \lambda_i(t,\epsilon)<\sum_{i=1}^k  \lambda_i(s,\epsilon).
\]
  Using  the fact that the coefficients of $tp+(1-t)q_{\epsilon}$  are $\epsilon$ close to the coefficients of  $tp+(1-t)q$ and then using continuity of the roots with respect to the coefficients, we get that \\
   $ \lambda_i(t,\epsilon) \longrightarrow_{\epsilon \to 0}  \lambda_i(t)$, as well as  $ \lambda_i(s,\epsilon) \longrightarrow_{\epsilon \to 0}  \lambda_i(s)$ and finally:
   \[
    \sum_{i=1}^k  \lambda_i(t)\leq \sum_{i=1}^k  \lambda_i(s).
   \]
\end{proof}

\begin{theorem}\label{NSCM}Assume again $p$ and $q$ share a common interlacer and  $ \sum_{i=1}^{n}\lambda_i= \sum_{i=1}^{n}\mu_i $.
We have the equivalence:

\[
\text{for all $k< n,$}  \sum_{i=1}^{i=k} \frac{q[\lambda_i]}{p'[\lambda_i]} \geq 0     \iff   p \succeq^s q
\]
\end{theorem}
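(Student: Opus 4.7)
The plan is quite short because one direction is already in hand. The implication $(\Leftarrow)$ is exactly Corollary~\ref{suffcor}: under the equal-trace hypothesis it converts the sum-of-residues inequalities into strong majorization. So all that remains is the converse: $p \succeq^s q \implies \sum_{i=1}^{k} q[\lambda_i]/p'[\lambda_i] \geq 0$ for every $k<n$.

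For this converse, I would differentiate the partial sums $S_k(t)$ at the endpoint $t=1$ and reuse the identity already established in the proof of Lemma~\ref{strictineq},
\[
\frac{dS_k(t)}{dt} \;=\; \frac{1}{t}\sum_{i=1}^k \frac{q[\lambda_i(t)]}{p_t'[\lambda_i(t)]}.
\]
Since the roots of $p=p_1$ are simple, $p'[\lambda_i]\neq 0$, and the implicit function theorem gives smooth branches $\lambda_i(t)$ in a left-neighborhood of $t=1$. Hence $S_k$ is differentiable at $t=1$ with
\[
S_k'(1) \;=\; \sum_{i=1}^{k} \frac{q[\lambda_i]}{p'[\lambda_i]}.
\]
Strong majorization states that each $S_k$ is nondecreasing on $[0,1]$, so $S_k(t)\leq S_k(1)$ for every $t<1$; letting $t\to 1^-$ forces $S_k'(1)\geq 0$, which is exactly the inequality we want for each $k<n$. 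The boundary case $k=n$ is automatic from the equal-trace hypothesis, since $\sum_{i=1}^n q[\lambda_i]/p'[\lambda_i] = \sum\lambda_i-\sum\mu_i = 0$.

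I do not foresee a genuine obstacle: the whole argument is a one-sided derivative computation at a regular point. The only mild subtlety is the $1/t$ factor in the formula for $dS_k/dt$, but we avoid it here by evaluating at $t=1$, where the simple-root assumption keeps $p_t'[\lambda_i(t)]$ bounded away from zero; the dual form (with $p$ in place of $q$ and $1-t$ in place of $t$) would handle the other endpoint if one ever needed to push the argument there.
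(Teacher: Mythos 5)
Your proposal is correct and follows exactly the paper's route: the $(\Leftarrow)$ direction is Corollary~\ref{suffcor}, and the $(\Rightarrow)$ direction is the one-sided derivative computation $S_k'(1)=\sum_{i=1}^k q[\lambda_i]/p'[\lambda_i]\geq 0$ forced by the monotonicity of $S_k$ near $t=1$. The extra care you take with the $1/t$ factor and the implicit function theorem at the simple roots of $p$ is a slight elaboration of what the paper leaves implicit, but the argument is the same.
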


\begin{proof}
It is sufficient according to Corollary~\ref{suffcor}. The only remaining part is the necessity. So assume strong majorization holds. We have:
\[
\frac{dS_k(t)}{dt}|_{t=1}=\sum_{i=1}^k  \frac{d  \lambda_i(1)}{dt}=  \sum_{i=1}^k \frac{q[\lambda_i]}{p'[\lambda_i]}.  
\]
In particular, this quantity has to be nonnegative my monotonicity in a neighborhood of $1$, which proves what we want. 
\end{proof}

\section{Strong majorization versus simple majorization}
Now let us investigate if it is possible that some partial sums are negative (so absence of strong majorization) despite majorization. It will show that strong majorization is indeed a stronger notion than simple majorization (as majorization can take place without strong majorization). 

\begin{theorem}\label{DiffMaj}
Assume $p \succeq q$ admit a common interlacer and have distinct roots.  If there is intermediate equality for partial sums of roots, that is  $\sum_{i=1}^k \lambda_i = \sum_{i=1}^k \mu_i$ for  $k<n$ (and of course $k>1$), then there exists a partial sum of residues $ \sum_{i=1}^{i=k_0} \frac{q[\lambda_i]}{p'[\lambda_i]} <0$, for $k_0 \leq k$. So in this case, there is no strong majorization.
\end{theorem}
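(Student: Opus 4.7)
I would argue by contradiction. Suppose that $F_{k_0} := \sum_{i=1}^{k_0}\frac{q[\lambda_i]}{p'[\lambda_i]} \geq 0$ for every $k_0 \leq k$, and try to derive a contradiction. Consider the interpolation $p_t = tp+(1-t)q$ with partial sums $S_{k_0}(t) = \sum_{i=1}^{k_0}\lambda_i(t)$. The intermediate equality $\sum_{i=1}^k \lambda_i = \sum_{i=1}^k \mu_i$ reads as $S_k(0)=S_k(1)$. Equation~\ref{derivroots} at $t=0$ combined with Theorem~\ref{NCM} applied to $p \succeq q$ gives $S_k'(0) = -\sum_{i=1}^k p[\mu_i]/q'[\mu_i] > 0$ strictly; the same equation at $t=1$ gives $S_k'(1) = F_k \geq 0$ by the working assumption. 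The strategy is to upgrade this to ``$S_k$ is nondecreasing on $[0,1]$'', since combined with $S_k(0)=S_k(1)$ this forces $S_k$ constant, contradicting $S_k'(0)>0$.

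To establish the global nondecrease I plan to run the contradiction argument of Lemma~\ref{strictineq} in a localized form. That proof pins the latest time $t_0 \in (0,1)$ at which some $dS_j/dt$ vanishes and applies Theorem~\ref{NCM} to $(p, p_{t_0})$, relying on global strong majorization of $p$ over $p_{t_0}$ on $[t_0, 1]$. In our setting I only control $F_{k_0}$ for $k_0 \leq k$, so the global majorization may fail beyond the top $k$ partial sums. I would sidestep this by passing to the truncated polynomials $\tilde p := \prod_{i=1}^k(x-\lambda_i)$ and $\tilde q := \prod_{i=1}^k(x-\mu_i)$, which are monic of degree $k$, share a common interlacer, have distinct roots, and satisfy $\tilde p \succeq \tilde q$ with automatic equality $\sum_{i=1}^k \lambda_i = \sum_{i=1}^k \mu_i$. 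The bridge between the original and truncated residues is the factorization
\[
\frac{q[\lambda_i]}{p'[\lambda_i]} = w_i \cdot \frac{\tilde q[\lambda_i]}{\tilde p'[\lambda_i]}, \qquad w_i := \prod_{m=k+1}^n \frac{\lambda_i - \mu_m}{\lambda_i - \lambda_m} > 0 \quad (i \leq k).
\]
An Abel summation on this identity, together with $\tilde F_k = \sum_{i=1}^k \tilde q[\lambda_i]/\tilde p'[\lambda_i] = 0$, would convert the nonnegativity of $F_{k_0}$ (for $k_0 \leq k$) into the nonnegativity of $\tilde F_{k_0}$ for $k_0 < k$, and then Theorem~\ref{NSCM} applied to the pair $\tilde p, \tilde q$ would yield $\tilde p \succeq^s \tilde q$, providing the localized monotonicity required.

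The main obstacle is the Abel summation and the reduction back from the truncated dynamics to the original $S_k$. The transfer of sign between $F_{k_0}$ and $\tilde F_{k_0}$ requires the weights $w_i = \bar q(\lambda_i)/\bar p(\lambda_i)$ (with $\bar p := \prod_{i=k+1}^n(x-\lambda_i)$ and $\bar q := \prod_{i=k+1}^n(x-\mu_i)$) to be appropriately monotone in $i$; equivalently, the rational function $\bar q/\bar p$ must be monotone on the interval $(\lambda_{k+1}, \infty)$ where the top roots live. I would establish this monotonicity using the partial fraction expansion $\bar q/\bar p - 1 = \sum_{m>k}\frac{\bar q[\lambda_m]}{\bar p'[\lambda_m]}\frac{1}{x - \lambda_m}$ together with Schur convexity applied to the bottom polynomials, encoded in the identity $\sum_{i>k}\lambda_i \bar q[\lambda_i]/\bar p'[\lambda_i] = \tfrac{1}{2}\bigl(\sum_{i>k}\lambda_i^2 - \sum_{i>k}\mu_i^2\bigr) > 0$. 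Once the weight monotonicity and consequent truncated strong majorization are secured, the contradiction with $S_k(0)=S_k(1)$ and $S_k'(0)>0$ follows and the theorem is proved.
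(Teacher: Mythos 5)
Your opening moves are sound and you correctly identify the central factorization: your $\frac{q[\lambda_i]}{p'[\lambda_i]} = w_i\cdot\frac{\tilde q[\lambda_i]}{\tilde p'[\lambda_i]}$ is exactly the paper's $\Delta^k(\lambda_i)Q^k(\lambda_i)$. The Abel-summation transfer from $F_{k_0}\geq 0$ to $\tilde F_{k_0}\geq 0$ for $k_0\leq k$ also works by induction, starting from $\tilde F_1 = F_1/w_1>0$, \emph{given} that $w_1<w_2<\cdots<w_k$. But at this point you should stop and conclude algebraically: the truncated partial-fraction identity gives $\tilde F_k=\sum_{i\leq k}(\lambda_i-\mu_i)=0$, whereas the Abel identity
\[
w_k\tilde F_k = F_k + \sum_{j=1}^{k-1}(w_{j+1}-w_j)\tilde F_j,
\]
with every summand nonnegative and $(w_2-w_1)\tilde F_1>0$, forces $\tilde F_k>0$. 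That contradiction is the proof, and it is precisely the paper's argument repackaged (the paper writes $\tilde F_k=\sum_j\alpha_j\Sigma_j$ with $\alpha_j=1/w_j-1/w_{j+1}>0$). You never need Theorem~\ref{NSCM} or the $t$-dynamics.

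The detour you actually propose --- deducing $\tilde p\succeq^s\tilde q$ and then hoping to return to the original $S_k(t)$ --- contains a genuine gap, and it is the one you flag as ``the main obstacle.'' The roots of $t\tilde p+(1-t)\tilde q$ are not the top $k$ roots of $tp+(1-t)q$ (truncation and convex interpolation do not commute), so strong majorization of the truncated pair gives no control on $S_k(t)$ for the original pair. Nothing in your sketch bridges this, and I do not see a clean fix; the algebraic route above is what saves the argument.

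A secondary gap: the weight monotonicity $w_1<\cdots<w_k$ (equivalently, $\bar q/\bar p$ strictly decreasing on $[\lambda_k,\lambda_1]$) is not established by the moment identity $\sum_{i>k}\lambda_i\,\bar q[\lambda_i]/\bar p'[\lambda_i]=\tfrac12(\sum_{i>k}\lambda_i^2-\sum_{i>k}\mu_i^2)>0$; that is a single scalar inequality, not a pointwise derivative bound. The paper instead computes
\[
\bigl(\log(\bar q/\bar p)\bigr)'(x)=h_x(\mu_{k+1},\dots,\mu_n)-h_x(\lambda_{k+1},\dots,\lambda_n),\qquad h_x(\nu):=\sum_j\frac{1}{x-\nu_j},
\]
shows $h_x$ is strictly Schur convex for $x\in[\lambda_k,\lambda_1]$ via Schur--Ostrowski, and applies it to the truncated majorization $(\lambda_{k+1},\dots,\lambda_n)\succeq(\mu_{k+1},\dots,\mu_n)$, which itself uses the hypothesis $\sum_{i\leq k}\lambda_i=\sum_{i\leq k}\mu_i$ together with $p\succeq q$. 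That Schur-convexity step is what your proposal is missing to justify the weight monotonicity you rely on.
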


\begin{proof}
Assume there exists $k<n$ such that  $\sum_{i=1}^k \lambda_i = \sum_{i=1}^k \mu_i$. Write for $i\leq k$:
\[
\frac{q[\lambda_i]}{p'[\lambda_i]} =\frac{\prod_{l=1} ^k(\lambda_i - \mu_l)}{\prod_{l \neq i, l=1}^{k}(\lambda_i-\lambda_l)} \prod_{j=k+1}^n \frac{(\lambda_i- \mu_j)}{(\lambda_i-\lambda_j)}=   \Delta^k(\lambda_i)Q^k(\lambda_i),
\]
where:
\begin{align*}
\Delta^k(x)&:= \frac{\prod_{l=1} ^k(x - \mu_l)}{\prod_{l \neq i, l=1}^{k}(x-\lambda_l)} & \text{and} &&Q^k(x):= \prod_{j=k+1}^n \frac{(x- \mu_j)}{(x-\lambda_j)}.
\end{align*}

\begin{lemma}
We have $\frac{d Q^k(x)}{dx} <0$ for $x \in  \left[\lambda_k,\lambda_1 \right]$. 
\end{lemma}
\begin{proof}
Write  $Q_{\mu}^k(x):= \prod_{j=k+1}^n(x- \mu_j) $ and $Q_{\lambda}^k(x):= \prod_{j=k+1}^n(x- \lambda_j)$. 
As $Q_{\mu}^k$ and $Q_{\lambda}^k$ are positive for $x \in \left[\lambda_k,\lambda_1 \right]$,  
\[
 \sign(\frac{d Q^k(x)}{dx})= \sign( \frac{Q_{\mu}'^k(x)}{Q_{\mu}^k(x)} -   \frac{Q_{\lambda}'^k(x)}{Q_{\lambda}^k(x)} ) = \sign(\sum_{j=k+1}^n\frac{1}{x-\mu_j}- \sum_{j=k+1}^n \frac{1}{x-\lambda_j} ).
 \]
 Now, call $ h_x( \nu_{k+1},\dots,\nu_n):= \sum_{j=k+1}^n \frac{1}{x-\nu_j}$. We have, for $k<i,j \leq n$,
 \[
 \frac{\partial h_x}{\partial \nu_i} -  \frac{\partial h_x}{\partial \nu_j} = \frac{1}{(x-\nu_i)^2} -  \frac{1}{(x-\nu_j)^2}= \frac{(\nu_i-\nu_j)(2x-\nu_i-\nu_j) }{[(x-\nu_i)(x-\nu_j)]^2}.
 \]
 So that, for $x \in \left[\lambda_k,\lambda_1 \right]$, and  $\nu_i \neq \nu_j< \lambda_k$, $2x >\nu_i+\nu_j$,
 
 \begin{equation}\label{partialderiv}
 (\nu_i -\nu_j) \Big[\frac{\partial h_x}{\partial \nu_i} -  \frac{\partial h_x}{\partial \nu_j} \Big]= \frac{(\nu_i-\nu_j)^2(2x-\nu_i-\nu_j) }{[(x-\nu_i)(x-\nu_j)]^2}>0.
 \end{equation}
 Define $\mathcal{D}:= \{\nu, \nu<\lambda_k\}$. Then as  $ h_x( \nu_{k+1},\dots,\nu_n)$ is symmetric and verifies Equations~\ref{partialderiv} on $\mathcal{D}^{n-k}$, Theorem~\ref{ostroglobal} tells us that $h_x$ is Schur convex on $\mathcal{D}^{n-k}$ .
 Here comes the crucial part. As  $\sum_{j=1}^k \lambda_j = \sum_{j=1}^k \mu_j$, we have for all $i>0$ such that $i+k \leq n$:  $\sum_{j=k+1}^{k+i} \lambda_j \geq \sum_{j=k+1}^{k+i} \mu_i$ by the fact that $p \succeq q$, which leads to $(\lambda_{k+1},\dots.,\lambda_n) \succeq (\mu_{k+1},\dots.,\mu_n)$. This majorization of the vector of roots starting at the index $k+1$ plus the strict Schur convexity of $h_x$ leads (see Definition~\ref{schurconvex}) to:  
  \[
  h_x( \lambda_{k+1},\dots,\lambda_n) > h_x( \mu_{k+1},\dots,\mu_n),
  \]
  which in turn implies for $x \in \left[\lambda_k,\lambda_1 \right]$; $ \frac{d Q^k(x)}{dx} <0$.
 \end{proof}
 In particular we get: $Q^k(\lambda_k)>Q^k(\lambda_{k-1})\dots>Q^k(\lambda_1)>0$.
Now assume by way of contradiction that for all $1\leq j \leq k$, $ \Sigma_j:= \sum_{i=1}^j \frac{q}{p'} [\lambda_i] =\sum_{i=1}^j \Delta^k(\lambda_i)Q^k(\lambda_i) \geq 0$ (in particular $\Sigma_j$ are positive linear combinations of the $\Delta^k(\lambda_i)$)  .
\begin{lemma}
 We can express $\sum_{j=1}^k \Delta^k(\lambda_j)$ as a positive combination of the $\Sigma_j$, that is there exist a sequence $\alpha_j>0$ such that: 
\[
\sum_{j=1}^k\Delta^k(\lambda_j)= \sum_{j=1}^k \alpha_j \Sigma_j.
\]
\end{lemma}
\begin{proof}
Let us recursively define the coefficients, using the identity:
\[
\sum_{j=1}^k\Delta^k(\lambda_j)=  \sum_{j=1}^k\Delta^k(\lambda_j)\sum_{i=j}^k \alpha_i Q^k(\lambda_j).
\]

 We want to get for all $j\leq k$:
 \[
 1= Q^k(\lambda_j)\sum_{i=j}^k \alpha_i.
 \] 
 Let us prove that these equalities determine the $\alpha_j$ sequence uniquely. We have $\alpha_k= \frac{1}{Q^k(\lambda_k)}>0$. Now we proceed by induction. Assume that for $j>j_0$,  $\alpha_j>0$ is such that: $\alpha_j Q^k(\lambda_{j}) + Q^k(\lambda_{j})\sum_{i=j+1}^k  \alpha_i$ =1 is uniquely defined. We are looking for $\alpha_{j_0}>0$ such that : $ \big(\sum_{j=j_0}^k \alpha_j \big) Q^k(\lambda_{j_0}) =1$.
But
\[
1- (\sum_{j=j_0+1}^k \alpha_j \big) Q^k(\lambda_{j_0}) = 1-  \Big[\sum_{j=j_0+1}^k \alpha_j Q^k(\lambda_{j_0+1}) \Big]   \frac{Q^k(\lambda_{j_0})}{Q^k(\lambda_{j_0+1})}= 1-  \frac{Q^k(\lambda_{j_0})}{Q^k(\lambda_{j_0+1})} >0.
\]
So that  $\alpha_{j_0}:= \frac{1- (\sum_{j=j_0+1}^k \alpha_j \big) Q^k(\lambda_{j_0})}{Q^k(\lambda_{j_0})}>0$ is uniquely defined.
\end{proof}

Now let us use this expression to conclude.  $\Sigma_1>0$ (trivially), and for all $j\leq k$, $\Sigma_j \geq 0$ would lead to $\sum_{j=1}^k \Delta^k(\lambda_i) >0$. Then, we can consider the truncated polynomials $p_k =\prod_{i=1}^k(x-\lambda_i)$ and $q_k =\prod_{i=1}^k(x-\mu_i)$, and do the simple fraction decomposition:
\[
\frac{q_k}{p_k}-1= \sum_{i=1}^k \Delta^k(\lambda_i) \frac{1}{x-\lambda_i}.
\]
 We can equate the leading coefficients on both sides which gives us the identity: $\sum_{i=1}^k \lambda_i - \sum_{i=1}^k \mu_i = \sum_{i=1}^k \Delta^k(\lambda_i) =0$. Which implies a contradiction and therefore a $\Sigma_j$ for $j \leq k$ has to be negative. 
\end{proof}

We derive easily the following continuity property.

\begin{lemma}\label{contmaj}
If    $p \succeq^s q$, then $up+(1-u)q  \succeq^s  vp+(1-v)q$ for all $u\geq v$ in $[0,1]$.
\end{lemma}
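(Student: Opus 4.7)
The plan is to reduce the claim directly to the definition of strong majorization, using the linearity of the interpolating family $p_t = tp+(1-t)q$. The key identity is that for any $s \in [0,1]$,
\[
s\,p_u+(1-s)\,p_v = \bigl(su+(1-s)v\bigr)p + \bigl(1-su-(1-s)v\bigr)q = p_{t(s)},
\]
where $t(s):= v+s(u-v)$. Since $u\geq v$, the map $s\mapsto t(s)$ is affine and nondecreasing from $[0,1]$ onto $[v,u]\subset[0,1]$. So the continuous interpolation between $p_u$ and $p_v$ is exactly a monotone reparametrization of the sub-interpolation of $p$ and $q$ on $[v,u]$.

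Before invoking the definition, I need to verify that $p_u$ and $p_v$ satisfy its prerequisites: a common interlacer and equal total sums of roots. If $r$ interlaces both $p$ and $q$, then by Lemma~\ref{commoninterl} it interlaces every $p_t$, and in particular $p_u$ and $p_v$, so they admit a common interlacer. For the sums, the hypothesis $p\succeq^s q$ implies that $S_n(t)$ is nondecreasing on $[0,1]$, while the equality of total sums at the endpoints forces $S_n(0)=S_n(1)$; hence $S_n$ is constant on $[0,1]$ and in particular $S_n(u)=S_n(v)$, i.e. $p_u$ and $p_v$ share their sum of roots.

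Finally, let $\widetilde{\lambda}_i(s)$ denote the decreasingly ordered roots of $sp_u+(1-s)p_v$. By the identity above, $\widetilde{\lambda}_i(s)=\lambda_i(t(s))$, so the partial sums satisfy $\widetilde{S}_k(s)=S_k(t(s))$. For $s\leq s'$ in $[0,1]$, monotonicity of $t(\cdot)$ gives $t(s)\leq t(s')$, and the strong majorization of $p$ over $q$ gives $S_k(t(s))\leq S_k(t(s'))$ for each $k<n$. Thus $\widetilde{S}_k$ is nondecreasing on $[0,1]$ for every $k<n$, which is exactly $p_u\succeq^s p_v$. There is no real obstacle in this argument; everything follows once the affine reparametrization identity is recognized, and the only mild care needed is to extract equality of total sums from the monotonicity of $S_n$.
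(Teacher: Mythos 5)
Your proof is correct and follows essentially the same route as the paper: both rely on the observation that $s\,p_u+(1-s)\,p_v = p_{t(s)}$ with $t(s)=v+s(u-v)$ a nondecreasing affine reparametrization, so the partial sums inherit monotonicity from $p\succeq^s q$. The extra care you take to verify the common interlacer and equality of total sums for $p_u,p_v$ is a sensible tightening of the paper's argument, which takes these for granted.
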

\begin{proof}
Denote the roots of $t\big(up+(1-u)q\big)+ (1-t)\big(vp+(1-v)q\big)$ by $\lambda_i(t,u,v)$. As:
\[
t\big(up+(1-u)q\big)+ (1-t)\big(vp+(1-v)q\big)= p \big(ut+v(1-t) \big) + q\big(1 -  \big(ut+v(1-t) \big)\big)
\]
and for $s\geq t$:
\[ us+v(1-s)\geq ut+v(1-t) ,
\] 
We obtain, using $p \succeq^s q$, for all $k \leq n$:
\[
\sum_{i=1}^k \lambda_i(s,u,v) \geq \sum_{i=1}^k \lambda_i(t,u,v),
\]
which proves that  $up+(1-u)q  \succeq^s  vp+(1-v)q$. 
\end{proof}
From this, we deduce the following characterization of strong majorization.
\begin{cor}[Strict monotonicity of strong majorization]\label{strictmon}
Assume $p$ and $q$ admit a common interlacer and have distinct roots, as well as $\sum_{i}^n\lambda_i= \sum_{i}^n\mu_i$.Then:
\[
   p \succeq^s q  \iff \text{ }\forall k<n,  \sum_{i=1}^k\lambda_i(t) \text{ is increasing (strictly) on } [0,1]. 
\]
\begin{proof}
Assume that a partial sum  $\sum_{i=1}^k\lambda_i(t) $ is not increasing, for $1<k<n$. It implies that there exist $s$ and $t$ such that $s>t$ and $\sum_{i=1}^k\lambda_i(s)= \sum_{i=1}^k\lambda_i(t)$, as strong majorization means the partial sums are nondecreasing. By Lemma~\ref{contmaj},  $p_s=sp+(1-s)q$ and $p_t=tp+(1-t)q$ are such that $p_s\succeq^s p_t$. Using Theorem~\ref{DiffMaj}, we get a contradiction. Therefore all the sums are increasing (strictly). 
\end{proof}
\end{cor}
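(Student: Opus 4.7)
The plan is as follows. The $(\Leftarrow)$ direction is immediate: strict monotonicity of every $S_k(t)$ on $[0,1]$ is strictly stronger than the nondecreasing behaviour demanded by the definition of $p \succeq^s q$. All the work is in the forward direction.

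For $(\Rightarrow)$, I would argue by contradiction. Suppose $p \succeq^s q$ and that some partial sum $S_k(t) = \sum_{i=1}^k \lambda_i(t)$ with $k < n$ fails to be strictly increasing on $[0,1]$. Since strong majorization already gives that $S_k$ is nondecreasing, the failure of strictness means there exist $t < s$ in $[0,1]$ with $S_k(s) = S_k(t)$. I would first dispose of the edge case $k=1$: Lemma~\ref{commoninterl} asserts that $\lambda_1(t)$ moves monotonically on the closed interval joining $\mu_1$ and $\lambda_1$, and the distinct-roots hypothesis together with $p \succeq^s q$ (which forces $\lambda_1 \geq \mu_1$, hence $\lambda_1 > \mu_1$) makes $\lambda_1(t)$ strictly increasing in $t$, ruling out any equality $S_1(s) = S_1(t)$ with $s \neq t$.

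For $1 < k < n$ the strategy is to feed the pair $(p_s, p_t) := \bigl(sp+(1-s)q,\, tp+(1-t)q\bigr)$ into Theorem~\ref{DiffMaj}. From Lemma~\ref{contmaj} I immediately get $p_s \succeq^s p_t$, and in particular $p_s \succeq p_t$. The remaining hypotheses of Theorem~\ref{DiffMaj} transfer cleanly: any polynomial interlacing both $p$ and $q$ interlaces every convex combination by Lemma~\ref{commoninterl}, giving a common interlacer for $(p_s, p_t)$; each intermediate $p_t$ has distinct roots because its roots sit in the non-crossing intervals joining the $\lambda_i$ and $\mu_i$, whose interiors are disjoint under the distinct-roots hypothesis on $p$ and $q$; the total sums coincide since the coefficient of $x^{n-1}$ in $p_t$ equals $-\bigl(t\sum_i \lambda_i + (1-t)\sum_i \mu_i\bigr)$, which is $t$-independent when $\sum_i\lambda_i = \sum_i\mu_i$, so $S_n(s) = S_n(t)$; and the intermediate partial-sum equality at the interior index $k$ is precisely the assumed $S_k(s) = S_k(t)$. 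Theorem~\ref{DiffMaj} then produces some $k_0 \leq k$ with a negative partial sum of residues for the pair $(p_s, p_t)$, which by the necessary direction of Theorem~\ref{NSCM} contradicts $p_s \succeq^s p_t$.

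The only delicate step I anticipate is verifying simplicity of the roots of every intermediate $p_t$ carefully enough that Theorem~\ref{DiffMaj} applies without qualification; once those hypotheses are in place the contradiction is mechanical. All substantive content is carried by Lemma~\ref{contmaj} and Theorem~\ref{DiffMaj} — this corollary is essentially a rigidification argument that upgrades \emph{nondecreasing} to \emph{strictly increasing} by ruling out any plateau of $S_k$ via a self-similar reduction.
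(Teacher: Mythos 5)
Your argument is correct and follows the paper's own route — invoke Lemma~\ref{contmaj} to obtain $p_s \succeq^s p_t$, then feed the pair $(p_s,p_t)$ into Theorem~\ref{DiffMaj} — while filling in hypotheses that the paper's terse proof leaves implicit (the $k=1$ case, the common interlacer, the distinct-root check for $p_s$ and $p_t$, and the identification of the contradiction via Theorem~\ref{NSCM}). One point to tighten: Lemma~\ref{commoninterl}'s monotonicity plus $\lambda_1>\mu_1$ does not by itself rule out a plateau of $\lambda_1(t)$, and disjointness of the interlacing intervals does not by itself prevent $p_s$ and $p_t$ from sharing a root \emph{within} the same interval; the clean argument, which settles both, is that $\lambda_i(s)=\lambda_i(t)=c$ with $s\neq t$ would force $p(c)=q(c)=0$, contradicting the hypothesis that $p$ and $q$ have no common root.
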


Note that if strong majorization fails in this extreme case when partial sums of roots are equal, in can also happen in a neighborhood.

\end{document}